\pdfoutput=1
\documentclass[reqno,11pt]{amsart}

\usepackage[letterpaper, margin=1in]{geometry}
\linespread{1.1}

\usepackage{microtype}

\usepackage{amsmath, amssymb, amsthm, bm}
\usepackage[shortlabels]{enumitem}

\usepackage{hyperref}

\usepackage{parskip}
\usepackage[center]{caption}

\usepackage[utf8]{inputenc}
\usepackage{float}
\newtheorem{theorem}{Theorem}[section]
\newtheorem{corollary}[theorem]{Corollary}

\newtheorem{lemma}[theorem]{Lemma}
\newtheorem{prop}[theorem]{Proposition}

\theoremstyle{definition}
\newtheorem{definition}{Definition}[section]
\setcounter{secnumdepth}{3}
\usepackage[noabbrev,capitalize]{cleveref}
	\crefname{claim}{Claim}{Claims}
	\Crefname{claim}{Claim}{Claims}
	\crefname{app-corollary}{Corollary}{Corollaries}
	\Crefname{app-corollary}{Corollary}{Corollaries}
	\crefname{app-definition}{Definition}{Definitions}
	\Crefname{app-definition}{Definition}{Definitions}
	\crefname{figure}{Figure}{Figures}
	\Crefname{figure}{Figure}{Figures}
	\crefname{lemma}{Lemma}{Lemmata}
	\Crefname{lemma}{Lemma}{Lemmata}
	\crefname{app-lemma}{Lemma}{Lemmata}
	\Crefname{app-lemma}{Lemma}{Lemmata}
	\crefname{app-proposition}{Proposition}{Proposition}
	\Crefname{app-proposition}{Proposition}{Proposition}
	\crefname{app-theorem}{Theorem}{Theorems}
	\Crefname{app-theorem}{Theorem}{Theorems}

\newcommand{\rn}{\mathbb{R}^n}

\newcommand{\rd}{\mathbb{R}^d}
\newcommand{\one}{\mathbf{1}}
\newcommand{\onep}{\mathbf{1}^{\perp}}
\newcommand{\bg}{\overline{B_G}}
\DeclareMathOperator{\rank}{rank}

\makeatletter
\newcommand{\setword}[2]{%
  \phantomsection
  #1\def\@currentlabel{\unexpanded{#1}}\label{#2}%
}
\makeatother

\title{Spectral Conditions for Spherical 2-Distance Sets}

\author{Iliyas Noman}
\author{Yuan Yao}
\thanks{Noman was sponsored by HSSIMI, Sts. Cyril \& Methodius International Foundation, and Foundation for support of children Mariyana Palanchova. Yao was supported by the MIT Math Department.}

\email{iliyas@mit.edu, yyao1@mit.edu}

\begin{document}

\begin{abstract}
A set of points $S$ in $d$-dimensional Euclidean space $\mathbb{R}^d$ is called a \emph{2-distance set} if the set of pairwise distances between the points has cardinality two. The 2-distance set is called \emph{spherical} if its points lie on the unit sphere in $\mathbb{R}^{d}$. We characterize the spherical 2-distance sets using the spectrum of the adjacency matrix of an associated graph and the spectrum of the projection of the adjacency matrix onto the orthogonal complement of the all-ones vector. We also determine the lowest dimensional space in which a given spherical 2-distance set could be represented using the graph spectrum.
\end{abstract}

\maketitle

\section{Introduction}
The problem of sets with few distances is considered to be naturally emerging in discrete geometry and coding theory.
The simplest case is the one of \emph{2-distance sets}, i.e. sets of points in $n$-dimensional space with only two possible distances between its elements.

The study of 2-distance sets began in 1947 when Kelly \cite{kelly1947elementary} showed that a 2-distance set in the plane can have at most 5 points. In 1963, Croft \cite{croft19639} showed that a 2-distance set in $\mathbb{R}^3$ can have at most 6 points. Larman, Rogers, and Seidel \cite{larman1977two} found an upper bound on the size of a 2-distance set in $\mathbb{R}^d$ that is asymptotically tight, essentially resolving this question. 

The other extreme case, the ``minimal'' 2-distance sets, has also been extensively studied.
There are infinitely many non-isomorphic 2-distance sets with $d+1$ points or fewer, so the smallest interesting case is $d+2$ points. 
In 1966, Einhorn and Schoenberg \cite{einhorn1966euclidean} showed that there is a finite number of 2-distance sets in $\rd$ with $d+2$ points and gave a combinatorial interpretation for the number of such ``small" sets (see \cref{main}). 


Motivated by \cref{main}, we are interested in characterizing the 2-distance sets whose points lie on the unit sphere. We call those sets \emph{spherical}. A previous characterization of the spherical 2-distance sets was completed in 2012 by Nozaki and Shinohara \cite{nozaki2012geometrical}, who gave a necessary and sufficient condition for a graph to correspond to a spherical 2-distance set in some Euclidean space (see Definition~\ref{sphericalrep}) based on Roy's previous results \cite{roy2010minimal}. The conditions in \cite[Theorem~2.4]{nozaki2012geometrical} involve bulky expressions that use the eigenvalues of the adjacency matrix and the angles of its eigenvectors with the all-ones vector.
In contrast, here we focus on the ``small" spherical graphs (with exactly $d+2$ vertices in $\rd$) and obtain a much cleaner necessary and sufficient condition for a graph to have a spherical representation in this case, only using the eigenvalues of the adjacency matrix and the eigenvalues of its projection onto a subspace.
\begin{theorem}\label{mainres}
    Let $d$ a be positive integer, and let $G$ be a graph on $d+2$ vertices whose adjacency matrix $A_G$ has eigenvalues $\lambda_1 \geq \lambda_2 \geq \dots \geq \lambda_{d+2}$. Let $J$ be the all-ones matrix and $P = I - \frac{1}{d+2}J$ be the projection matrix onto the subspace orthogonal to the all-ones vector. Then $G$ has a spherical representation in $\rd$ if and only if the maximum eigenvalue of $PA_GP$ is equal to $\lambda_2$ and the multiplicity of the eigenvalue $\lambda_2$ is equal in $A_G$ (excluding $\lambda_1$ if $\lambda_1 = \lambda_2$) and $PA_GP$.
\end{theorem}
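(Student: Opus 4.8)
The plan is to convert the geometric statement into a positive-semidefiniteness-plus-rank condition on a single parameterized Gram matrix and then to read off its spectrum from those of $A_G$ and $PA_GP$ by eigenvalue interlacing together with an explicit description of the kernel. A spherical representation in $\rd$ assigns unit vectors to the $d+2$ vertices whose pairwise inner products equal $a$ on edges and $b$ on non-edges, with $a\neq b$ and $a,b<1$; using the sign convention fixed in Definition~\ref{sphericalrep} I may assume $a<b$, so $s:=a-b<0$. Such a representation exists if and only if the candidate Gram matrix
\[
  M \;=\; (1-b)\,I \;+\; s\,A_G \;+\; b\,J
\]
is positive semidefinite with $\rank M\le d$, i.e.\ with nullity at least $2$, since a positive semidefinite $(d+2)\times(d+2)$ matrix of rank $\le d$ is exactly the Gram matrix of $d+2$ unit vectors spanning a subspace of dimension $\le d$. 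Thus the theorem reduces to: some $a<b$ makes $M\succeq 0$ with nullity $\ge 2$ if and only if the two spectral conditions hold. The sign $s<0$ is what makes the \emph{top} of the spectrum, hence $\lambda_2$ and the maximal eigenvalue of $PA_GP$, the governing quantities.

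I would rely on two structural facts. First, because $PJP=0$, the compression of $M$ to $\onep$ equals $(1-b)I+s\,(PA_GP)|_{\onep}$, so its eigenvalues are the affine images $(1-b)+s\mu_i$ of the eigenvalues $\mu_1\ge\cdots\ge\mu_{d+1}$ of $PA_GP$ on $\onep$; Cauchy interlacing applied to the pairs $(A_G,PA_GP)$ and to $M$ and its compression then ties all three spectra together, with $\lambda_1\ge\mu_1\ge\lambda_2$. Second, writing $\kappa:=(b-1)/s$, the identity $Mx=0\iff (sA_G+(1-b)I)x\in\mathbb{R}\one$ splits the kernel according to whether $x\perp\one$, yielding
\[
  \dim\ker M \;=\; \dim\!\bigl(\ker(A_G-\kappa I)\cap\onep\bigr)\;+\;\varepsilon,\qquad \varepsilon\in\{0,1\},
\]
where $\varepsilon=1$ exactly when a single secular, $\one$-coupled direction also lies in the kernel.

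For the forward direction I would argue as follows: if $M\succeq0$ has nullity $\ge 2$, then its compression inherits positive semidefiniteness, and since the displayed formula forces that compression to have nonzero kernel while $s<0$ makes $(1-b)+s\mu_1$ its \emph{smallest} eigenvalue, I get $(1-b)+s\mu_1=0$, i.e.\ $\kappa=\mu_1$; moreover $\mu_1=\kappa$ is then an eigenvalue of $A_G$ possessing an eigenvector in $\onep$, so $\mu_1\in[\lambda_2,\lambda_1]\cap\mathrm{spec}(A_G)$. The Perron eigenvector(s) of $A_G$ are nonnegative and hence not orthogonal to $\one$, which rules out $\mu_1=\lambda_1$ unless $\lambda_1=\lambda_2$; either way $\mu_1=\lambda_2$, and the nullity formula then forces the multiplicity equality. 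Conversely, given the two conditions I would set $b=1+s\lambda_2$ (so that $(1-b)+s\mu_1=0$) with $s<0$ and $a,b<1$: the multiplicity hypothesis makes the compression vanish exactly on the $\lambda_2$-part and be strictly positive elsewhere, so $M\succeq0$, while the nullity formula returns nullity $2$ and produces the representation.

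The hard part will be the non-commutativity of $A_G$ and $J$, i.e.\ the coupling of the $\one$-direction to $\onep$ for non-regular $G$. Concretely, $PA_GP$ always carries the trivial eigenvalue $0$ on $\one$, so the multiplicity of $\lambda_2$ in $PA_GP$ picks up an extra copy precisely when $\lambda_2=0$; and the three quantities $\dim\!\bigl(\ker(A_G-\lambda_2 I)\cap\onep\bigr)$, the multiplicity of $\lambda_2$ in $PA_GP$ restricted to $\onep$, and the multiplicity of $\lambda_2$ in $A_G$ can each differ by one, depending on whether $\one\perp\ker(A_G-\lambda_2 I)$ and on the secular term $\varepsilon$. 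Tracking these $\pm1$ discrepancies so that the total nullity lands at exactly $2$ — which is precisely what the correction ``excluding $\lambda_1$ if $\lambda_1=\lambda_2$'' and the full multiplicity equality (rather than the bare equality $\mu_1=\lambda_2$) are designed to encode, and which is what correctly excludes degenerate cases such as $\lambda_2=0$ — is where I expect the argument to require the most care.
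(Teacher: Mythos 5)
Your Gram-matrix framing is legitimate and genuinely different from the paper's route: the paper never touches Gram matrices, instead running Schoenberg's squared-distance criterion (Lemma~\ref{ns}), encoding sphericality as $rJ+B_G\succeq 0$ \emph{for all large} $r$ (Lemma~\ref{3and4}), and then spending its hardest pages (Lemmata~\ref{pom}, \ref{ref4}, \ref{last}) reducing that one-parameter family of conditions. Your forward direction is essentially fillable, modulo two unstated steps: for $w$ in the kernel of the compression you need $w^TMw=0$ plus $M\succeq 0$ to force $Mw=0$ (so such $w$ are genuine $\lambda_2$-eigenvectors of $A_G$), and you need a Perron-type argument (the paper's Lemma~\ref{v2} and Corollary~\ref{cor2}) showing that when $\lambda_1>\lambda_2$ the \emph{entire} $\lambda_2$-eigenspace lies in $\onep$ --- that is what upgrades ``some eigenvector in $\onep$'' to the full multiplicity equality. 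The converse, however, has two genuine gaps, and they sit exactly where the paper does its real work.

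First, ``the multiplicity hypothesis makes the compression vanish exactly on the $\lambda_2$-part and be strictly positive elsewhere, so $M\succeq 0$'' is a non sequitur: positive semidefiniteness of $PMP|_{\onep}$ says nothing about the coupling $PM\one = s\,PA_G\one$, which is nonzero for every non-regular $G$. With respect to the splitting $\mathbb{R}\one\oplus\onep$, $M\succeq 0$ additionally requires the generalized Schur-complement conditions: $PA_G\one$ must be orthogonal to the kernel of the compression (this is precisely the paper's condition $\overline{(5)}$ in Lemma~\ref{ref4}, and it is where the multiplicity hypothesis actually enters, via $\one^TA_Gw=\lambda_2\one^Tw=0$ once kernel vectors are true eigenvectors), and a scalar inequality $\one^TM\one \geq s^2(PA_G\one)^T(PMP)^{+}(PA_G\one)$ must hold. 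Your closing paragraph correctly names the $\one$--$\onep$ coupling as the hard part, but the elaboration then only addresses the $\pm 1$ multiplicity bookkeeping, never this PSD obstruction. Second, nullity $2$ is not automatic: with $b=1+s\lambda_2$ and generic $s<0$, $\dim\ker M$ equals $t$, the multiplicity of $\lambda_2$ excluding the top copy, and your $\varepsilon$ is $0$ unless the Schur-complement scalar vanishes. When $t=1$ --- by Proposition~\ref{lowestdimensionspherical} this is every spherical graph whose minimal dimension is exactly $d$, i.e.\ the bulk of Table~\ref{tab:count} --- you land at rank $d+1$, a spherical representation in $\mathbb{R}^{d+1}$ rather than $\rd$. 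The fix is to tune $s$: since $(PMP)^{+}$ scales like $1/|s|$ on $\onep$, the scalar above equals $(d+2)^2$ minus a positive multiple of $|s|$ (positivity uses $\lambda_2>0$, which your $\lambda_2=0$ bookkeeping must first supply), so it vanishes at a unique $s^{*}<0$ at which $M\succeq 0$ with $\dim\ker M=t+1\geq 2$; geometrically $s^{*}$ is where the circumsphere of the configuration becomes the unit sphere. Asserting ``the nullity formula returns nullity $2$'' skips this selection entirely, so as written the converse proves only representability in $\mathbb{R}^{d+1}$ in the main case.
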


In fact, the lowest dimensional space in which a given spherically representable graph $G$ is also determined by the multiplicity of the second eigenvalue of $G$.

\begin{prop}\label{lowestdimensionspherical}
    Let $n \leq d$ be a positive integer. If a graph $G$ on $d+2$ vertices is spherically representable in $\rd$, then it is (spherically) representable in $\rn$ if and only if the multiplicity of $\lambda_2$ in its adjacency matrix $A_G$ (excluding $\lambda_1$ if $\lambda_1 = \lambda_2$) is at least $d+1-n$. 
\end{prop}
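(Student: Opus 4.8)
The plan is to compute the minimum possible rank of a Gram matrix realizing $G$ and to show that it equals $d+1-m$, where $m$ denotes the multiplicity of $\lambda_2$ in $A_G$ (excluding $\lambda_1$ when $\lambda_1=\lambda_2$). As set up in the proof of \cref{mainres}, a spherical representation of $G$ corresponds to a positive semidefinite Gram matrix of unit vectors whose off-diagonal entries take exactly two values according to adjacency, i.e. a matrix of the form $M=(1-\beta)I+(\alpha-\beta)A_G+\beta J$ with $\alpha\neq\beta$ and $M\succeq 0$; such a representation lives in $\rn$ precisely when $\rank M\le n$. Hence $G$ is representable in $\rn$ iff some admissible $M$ has rank at most $n$, and the lowest dimension equals $\min\rank M$ over all admissible $(\alpha,\beta)$. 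Establishing $\min\rank M=d+1-m$ yields the proposition, since then representability in $\rn$ is equivalent to $d+1-m\le n$, i.e. to $m\ge d+1-n$.

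First I would factor out $s=\alpha-\beta\neq 0$ and write $M=s\bigl(A_G+qJ-\tau I\bigr)$ with $q=\beta/s$ and $\tau=(\beta-1)/s$. Taking the sign of $s$ consistent with the $\lambda_2$ (top) branch, the condition $M\succeq 0$ becomes $A_G+qJ\preceq\tau I$, and at a rank-minimizing choice one takes $\tau=\lambda_{\max}(A_G+qJ)$, so that $\rank M=(d+2)-\operatorname{mult}\bigl(\lambda_{\max}(A_G+qJ)\bigr)$. The problem therefore reduces to a clean extremal question: maximize over $q\in\mathbb{R}$ the multiplicity of the largest eigenvalue of the rank-one family $A_G+qJ=A_G+q(d+2)uu^{\top}$, where $u=\one/\sqrt{d+2}$. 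I would claim this maximum equals $m+1$, which gives $\min\rank M=d+1-m$.

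For the lower bound $\operatorname{mult}(\lambda_{\max})\le m+1$ I would use that $A_G+qJ$ is a rank-one perturbation of $A_G$: Weyl and Cauchy interlacing force the multiplicity of any eigenvalue of $A_G+qJ$ to exceed its multiplicity in $A_G$ by at most one, and combined with the hypothesis $\lambda_{\max}(PA_GP)=\lambda_2$ from \cref{mainres} this pins the relevant top eigenvalue at $\lambda_2$ with multiplicity at most $m+1$. For the matching upper bound I would exhibit a value $q^{\ast}$ achieving $m+1$: decomposing $\mathbb{R}^{d+2}=\operatorname{span}(\one)\oplus\onep$, the matrix $J$ acts only on $\one$, so on $\onep$ the family reduces to $PA_GP$, whose top eigenspace is the $m$-dimensional $\lambda_2$-eigenspace supplied by \cref{mainres}. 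These $m$ directions contribute the eigenvalue $\lambda_2$ essentially independently of $q$, and tuning $q^{\ast}$ drives exactly one further eigenvalue curve (the one carrying the $\one$-direction) up to $\lambda_2$, producing multiplicity $m+1$ while keeping $\lambda_2$ the maximum so that the corresponding $M$ is positive semidefinite.

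The main obstacle is the coupling between $\one$ and the $\lambda_2$-eigenspace when $G$ is not regular. For regular $G$ the vector $\one$ is an eigenvector of $A_G$, the decomposition above is exact, and the count $m+1$ is immediate. In general, however, $\one$ is not an eigenvector, so the top eigenspace of $PA_GP$ need not consist of genuine $A_G$-eigenvectors, and the degree vector has nonzero components along it; controlling these components is what makes the ``one extra curve reaches $\lambda_2$'' step delicate. This is precisely where the equality of the multiplicity of $\lambda_2$ in $A_G$ and in $PA_GP$ guaranteed by \cref{mainres} is essential: it ensures that at most one direction of the top eigenspace fails to be $A_G$-invariant, so that exactly one eigenvalue can be moved onto $\lambda_2$, neither more nor fewer. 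Verifying positive semidefiniteness at $q^{\ast}$ (equivalently, that $\lambda_2$ remains the extreme eigenvalue and that the induced $\alpha,\beta$ are admissible) then completes the computation of $\min\rank M$ and hence the proof.
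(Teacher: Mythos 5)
Your proposal is viable but takes a genuinely different route from the paper. The paper performs no optimization at all: by \cref{main} the admissible ratio $k$ is unique, and \cref{lbd2} forces $B_G = I - \frac{1}{\lambda_2}A_G - J$, so there is a single matrix whose rank must be computed. Its proof reads representability in $\rn$ as $\rank(B_G')\le n$ via \cref{ns}, identifies the nullspace of $B_G'$ (equivalently of $PB_GP$) with $\operatorname{span}(\one)$ together with the $\lambda_2$-eigenspace of $PA_GP$ via \cref{nullspace}, and then applies the multiplicity equality of \cref{mainres} --- essentially a four-line rank count. You instead minimize rank over the Gram family $M=(1-\beta)I+(\alpha-\beta)A_G+\beta J$ and reduce to the extremal spectral problem $\max_q\operatorname{mult}\bigl(\lambda_{\max}(A_G+qJ)\bigr)=m+1$, thereby re-deriving implicitly the uniqueness that the paper gets for free. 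What your approach buys is a self-contained Gram-matrix argument that does not lean on Einhorn--Schoenberg uniqueness, and its core mechanism is correct: by \cref{cor2} the $m$ relevant eigenvectors of $\lambda_2$ lie in $\onep$, hence are eigenvectors of $A_G+qJ$ with eigenvalue $\lambda_2$ for every $q$, while on the complementary invariant subspace the top eigenvalue is nondecreasing in $q$, equals $\lambda_1\ge\lambda_2$ at $q=0$, and tends to $\mu_{m+1}<\lambda_2$ as $q\to-\infty$ (writing $\mu_1\ge\dots\ge\mu_{d+1}$ for the spectrum of $PA_GP$ on $\onep$), so the intermediate value theorem supplies $q^{\ast}\le 0$, which is admissible since $\lambda_2>0>q^{\ast}-\lambda_2$ gives $\tau>q$. (Note also that the sign of $s$ is not actually a choice: edges carry the longer distance, hence the smaller inner product, so $\alpha<\beta$ and the top branch is forced by the paper's convention.)

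Two steps in your sketch need repair before this counts as a proof. First, the upper bound via ``a rank-one perturbation raises multiplicity over $A_G$ by at most one'' is too weak when $\lambda_1=\lambda_2$: there the multiplicity of $\lambda_2$ in $A_G$ is $m+1$ under the exclusion convention, so that bound only yields $m+2$. The correct argument intersects the top eigenspace of $A_G+qJ$ with $\onep$: any $v$ in that intersection satisfies $(A_G+qJ)v=A_Gv$, hence is an eigenvector of $PA_GP$ with eigenvalue $\lambda_{\max}(A_G+qJ)\ge\mu_1=\lambda_2$; consequently, whenever the top multiplicity is at least $2$, the top eigenvalue must equal $\lambda_2$ and its multiplicity is at most $\operatorname{mult}_{PA_GP}(\lambda_2)+1=m+1$, using the multiplicity of $\lambda_2$ in $PA_GP$ (exactly $m$, by \cref{mainres}) rather than in $A_G$. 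Second, your equivalence ``representable in $\rn$ iff some admissible $M$ has rank $\le n$'' establishes the \emph{spherical} version of the statement; to obtain the proposition's plain ``representable in $\rn$'' you should add that, since $k$ is unique (\cref{main}) and the configuration for a fixed set of distances is unique up to congruence (\cref{ns}), any representation of $G$ in $\rn$ is congruent to the spherical one in $\rd$ intersected with its affine hull, hence is itself spherical.
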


The paper is organized as follows. In Section~\ref{preliminaries}, we define (spherical) 2-distance sets and (spherically) representable graphs, and recall two results from the literature that we use in the paper. In Section~\ref{conditions}, we give four initial conditions for a graph on $d+2$ vertices to be representable as a spherical 2-distance set in $\rd$. In Section~\ref{secondeigen}, we rephrase the four conditions from Section~\ref{conditions} in terms of another matrix, and find a formula for the matrix that depends on the second eigenvalue of the adjacency matrix of the graph. In Section~\ref{reduction}, we further reduce the positive-semidefinite conditions, which we use to prove Theorem~\ref{mainres} and Proposition~\ref{lowestdimensionspherical} in Section~\ref{finsec}.

\section{Preliminaries}\label{preliminaries}
First, we formally define a 2-distance set and some related concepts.
\begin{definition}
A set of points $S$ in $\rd$ is a \emph{2-distance set} if 
\begin{equation*}
    |D| = 2, \text{ where } D = \{ ||p_i - p_j|| \text{ for } p_i, p_j(\neq p_i) \in S\}.
\end{equation*}
Let $D = \{\alpha_1, \alpha_2\}$ such that $\alpha_1 > \alpha_2$. Denote by $k=\alpha_1 / \alpha_2 > 1$ the \emph{distance ratio} of $S$. Two 2-distance sets are \emph{equivalent} if one can obtain one from the other via isometries and scaling.
\end{definition}

The structure of a 2-distance set can be naturally encoded with a graph.

\begin{definition}
For a 2-distance set $S$, we define its \emph{associated graph} $G = G(S)$ to be a graph whose vertices correspond  points in $S$, and two vertices are adjacent if and only if the distance between their corresponding points in $S$ is $\alpha_1$ (i.e. the longer distance of the two).
\end{definition}

We are also interested in the opposite direction, i.e. when a graph has an associated 2-distance set.

\begin{definition}
A graph $G$ is \emph{representable in $\rd$ with ratio $k>1$} if there exists a 2-distance set $S$ in $\rd$ with distance ratio $k>1$ such that $G = G(S)$ is the associated graph of $S$.
\end{definition}

There are infinitely many non-equivalent 2-distance sets on at most $d+1$ points. For instance, we can generate infinitely many 2-distance sets by taking a regular simplex on $d$ points in a hyperplane of dimension $d-1$ and an arbitrary point on the line through the center of the simplex that is perpendicular to the $(d-1)$-dimensional hyperplane of the simplex. 

The following theorem gives a necessary and sufficient condition for a graph of size $d+2$ to be representable in $\rd$, thus proving that there are finitely many 2-distance sets with $d+2$ points. It is a direct implication from the proof of Theorem 1 in \cite{einhorn1966euclidean}.

\begin{theorem}[Einhorn--Schoenberg \cite{einhorn1966euclidean}]\label{main}
    Let $G$ be a graph on $d+2$ vertices. If $G$ is a complete multipartite graph, then $G$ is not representable in $\rd$ for any value of $k>1$. If $G$ is not a complete multipartite graph, then there exists a unique value $k>1$ for which $G$ is representable in $\rd$ with ratio $k$. 
\end{theorem}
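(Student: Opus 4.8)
The plan is to encode the geometry in the matrix of \emph{squared} distances and apply the classical embeddability criterion for Euclidean distance matrices. After scaling so that the shorter distance equals $1$ and writing the longer distance as $k$, specifying a $2$-distance set with associated graph $G$ on $d+2$ points is the same as specifying a point configuration whose squared-distance matrix is
\[
M = (k^2-1)\,A_G + (J - I),
\]
because $M_{ij} = k^2$ on edges, $1$ on non-edges, and $0$ on the diagonal. By the classical result on Euclidean distance matrices, such a configuration exists in $\rd$ if and only if the doubly centered matrix $B = -\tfrac12\,P M P$ is positive semidefinite and has rank at most $d$. I would use $PJP = 0$ and $PIP = P$ to simplify $B$ to $\tfrac12\bigl(P - (k^2-1)\,PA_GP\bigr)$, so that on the $(d+1)$-dimensional space $\onep$, where $P$ acts as the identity, the eigenvalues of $B$ are $\tfrac12(1 - s\mu_i)$, with $s := k^2-1 > 0$ and $\mu_1 \ge \cdots \ge \mu_{d+1}$ the eigenvalues of the compression of $PA_GP$ to $\onep$.

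With this reduction the two geometric requirements become purely spectral. Positive semidefiniteness of $B$ on $\onep$ is exactly $s\mu_i \le 1$ for all $i$, while the rank bound $\rank B \le d$ (one below the maximal possible rank $d+1$) is exactly the requirement that $s\mu_i = 1$ for at least one $i$. Since $s > 0$, the maximum of the $s\mu_i$ occurs at $\mu_1$, so an admissible $s$ exists if and only if $\mu_1 > 0$; in that case equality at the top forces $s = 1/\mu_1$ uniquely, giving the unique ratio $k = \sqrt{1 + 1/\mu_1} > 1$. (Positivity of all squared distances makes the points distinct, and excluding the complete and empty graphs, both complete multipartite, ensures both distances genuinely occur.) Thus the whole theorem reduces to the claim that $\mu_1 > 0$ if and only if $G$ is \emph{not} complete multipartite.

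This last equivalence carries the real content, and I would establish both directions by exhibiting the right quadratic form on $\onep$, reading $\mu_1 = \max_{x \in \onep} x^\top A_G x / x^\top x$. If $G$ is complete multipartite with parts of sizes $n_1,\dots,n_r$, then $A_G = J - B$, where $B$ is the block-diagonal matrix of all-ones blocks $J_{n_1},\dots,J_{n_r}$ and is positive semidefinite; hence for every $x \in \onep$ one has $x^\top A_G x = -\,x^\top B x \le 0$, so $\mu_1 \le 0$ and $G$ is not representable. Conversely, if $G$ is not complete multipartite then (by the classical characterization) it contains three vertices $a,b,c$ with $a \sim b$ but $a \not\sim c$ and $b \not\sim c$; the vector $x$ equal to $1,1,-2$ on $a,b,c$ and $0$ elsewhere lies in $\onep$ and satisfies $x^\top A_G x = 2 > 0$, forcing $\mu_1 > 0$.

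The main obstacle is precisely the structural identity $A_G = J - B$ with $B \succeq 0$ for complete multipartite graphs, together with its converse via the forbidden edge-plus-isolated-vertex configuration; everything else is a routine translation between the distance-matrix criterion and the spectrum of the compressed adjacency matrix. I expect the only delicate bookkeeping to be the observation that ``rank at most $d$ out of a maximum of $d+1$'' is a single vanishing-eigenvalue condition, and the check that the degenerate complete and empty graphs are correctly absorbed into the complete multipartite case.
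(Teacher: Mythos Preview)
Your proof is correct. The paper does not actually prove this theorem---it is quoted from Einhorn--Schoenberg as background (``a direct implication from the proof of Theorem~1 in \cite{einhorn1966euclidean}'')---so your argument supplies what the paper takes for granted.

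Two comparisons are worth making with the machinery the paper \emph{does} develop. First, you use the centroid-centered Euclidean distance matrix criterion $-\tfrac12\,PMP \succeq 0$, $\rank \le d$, whereas the paper's Lemma~\ref{ns} and Proposition~\ref{1and2} work with the version centered at a fixed vertex $p_1$; the two are equivalent, but your choice makes the compression $PA_GP$ to $\onep$ appear immediately, and this is exactly the object the paper later builds Theorem~\ref{mainres} around. Second, as a byproduct your argument yields the explicit ratio $k = \sqrt{1 + 1/\mu_1}$ for \emph{every} representable graph on $d+2$ vertices, where $\mu_1$ is the top eigenvalue of $PA_GP$ on $\onep$; the paper only records the spherical special case $\mu_1 = \lambda_2$ in the Corollary after Lemma~\ref{lbd2}. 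The combinatorial core of your proof---that $\mu_1 > 0$ if and only if $G$ contains an induced $K_2 \cup K_1$, if and only if $G$ is not complete multipartite, established via $A_G = J - B$ with $B \succeq 0$ in one direction and the test vector $e_a + e_b - 2e_c$ in the other---is clean and correct.
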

\cref{main} allows us to work with the graph of the 2-distance set, which is a much simpler structure than the 2-distance set. We also cite the following lemma which was used to prove Theorem~\ref{main}. It gives us a necessary and sufficient condition for the existence of points with a fixed set of distances between them in a Euclidean space of dimension $d$.

\begin{lemma}[Schoenberg \cite{schoenberg1935remarks}] \label{ns}
Let $\{m_{ij}\}_{i,j = 1}^n$ be nonnegative real numbers such that $m_{ij} = m_{ji}$ for all $i,j$ and $m_{ii} = 0$ for $i = 1,2\dots n$. There exist points $p_1, \dots, p_n \in \mathbb{R}^d$ with $||p_i - p_j|| = m_{ij}$ for all $i, j$ if and only if the $(n-1) \times (n-1)$ matrix $B'$ with 
$$b_{ij}' = m_{1i}^2 + m_{1j}^2 - m_{ij}^2$$
is positive semidefinite (which we denote by $B' \succeq 0$) and $\rank(B') \leq d$. Moreover, this configuration is unique up to congruence.
\end{lemma}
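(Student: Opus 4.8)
The plan is to realize the matrix $B'$ (up to a harmless factor of $2$) as a Gram matrix of the difference vectors based at $p_1$, so that both positive semidefiniteness and the rank bound become transparent. First I would translate any putative configuration so that $p_1$ sits at the origin and set $v_i = p_i - p_1$ for $i = 2, \dots, n$. The polarization identity
$$\langle v_i, v_j\rangle = \tfrac{1}{2}\left(\|v_i\|^2 + \|v_j\|^2 - \|v_i - v_j\|^2\right) = \tfrac{1}{2}\left(m_{1i}^2 + m_{1j}^2 - m_{ij}^2\right)$$
shows that the Gram matrix $G = (\langle v_i, v_j\rangle)_{i,j=2}^{n}$ satisfies $B' = 2G$. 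Since scaling by a positive constant changes neither definiteness nor rank, it suffices to study $G$.

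For the forward direction, if points $p_1, \dots, p_n \in \rd$ with the prescribed distances exist, then $G$ is a genuine Gram matrix and hence positive semidefinite, so $B' \succeq 0$; moreover $\rank(G)$ equals the dimension of $\operatorname{span}\{v_2, \dots, v_n\}$, which is at most $d$ because the $v_i$ lie in $\rd$, giving $\rank(B') \le d$.

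For the reverse direction, suppose $B' \succeq 0$ with $r := \rank(B') \le d$. Then $G = \tfrac{1}{2}B' \succeq 0$ also has rank $r$, so it factors as $G = V^{\top} V$ for some $r \times (n-1)$ matrix $V$. I would take the columns of $V$ as vectors $v_2, \dots, v_n \in \mathbb{R}^{r} \subseteq \rd$ and set $p_1 = 0$ and $p_i = v_i$. A direct expansion $\|p_i - p_j\|^2 = G_{ii} - 2G_{ij} + G_{jj}$, together with $G_{ii} = m_{1i}^2$ (using $m_{ii} = 0$), recovers $\|p_i - p_j\| = m_{ij}$ for every pair, including those involving $p_1$.

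Finally, uniqueness up to congruence follows because a Gram matrix determines its generating vectors up to a common orthogonal transformation, while fixing $p_1$ at the origin pins down the translational degree of freedom, so any two realizations differ by an isometry. The argument is the classical Schoenberg computation, and the only point requiring care is the factorization $G = V^{\top}V$ with exactly $r$ rows: this is where the rank hypothesis is used to force the realization into dimension at most $d$, and it is the step that links the purely algebraic rank condition to the geometric dimension bound.
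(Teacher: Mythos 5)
Your proof is correct: the polarization identity gives $B' = 2G$ for the Gram matrix $G$ of the vectors $v_i = p_i - p_1$, the factorization $G = V^{\top}V$ with $\rank(B')$ rows realizes the configuration in dimension at most $d$, and the uniqueness claim follows from the standard fact that equal Gram matrices differ by an orthogonal map once $p_1$ is pinned at the origin. The paper itself does not prove this lemma---it cites it directly from Schoenberg---and your argument is precisely the classical Gram-matrix proof underlying that citation, so there is nothing to reconcile between the two.
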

Now we give a definition for a spherical 2-distance set, the main focus of this paper. A similar definition is used in the literature (\cite{musin2009spherical}, \cite{musin2019graphs}).

\begin{definition}\label{sphericalrep}
A 2-distance set $S$ in $\rd$ is called \emph{spherical} if all of its points lie on an $(d-1)$-dimensional sphere in $\rd$. A graph $G$ is \emph{spherical} or has a \emph{spherical representation} in $\rd$ if there exists a spherical 2-distance set $S\subset \rd$ whose associated graph is $G$.
\end{definition}

Finally, we denote by $A_G$ the adjacency matrix of a graph $G(V,E)$ and its eigenvalues by $\lambda_1 \geq \lambda_2 \geq \dots \geq \lambda_{|V|}$. For the remainder of the paper, $\lambda_i$ will always refer to the $i$-th eigenvalue of $A_G$.

\section{Conditions for spherical 2-distance sets}\label{conditions}

In this section we give a set of four necessary and sufficient conditions for a graph to have a spherical representation. Note that similar conditions have been studied for general spherical point sets, see \cite{tarazaga1996circum} and \cite{neumaier1981distance}.

First, we define the matrix $B_G$ which encodes the pairwise distances of the corresponding 2-distance set. 
\begin{definition}
Given a \emph{representable} graph $G(V, E)$ on $d+2$ vertices with ratio $k$, denote by $B_G = B_G(k)$ the $(d+2)\times(d+2)$ matrix with entries
    $$b_{ij} = 
    \begin{cases} 
        0 \text{, if } i = j\\
        -1 \text{, if } \{ i,j\} \not \in E\\
        -k^2 \text{, if } \{ i,j\} \in E.
    \end{cases}
$$
\end{definition}
Next, we gives an exact characterization of graphs with $d+2$ vertices that are representable in $\rd$. This characterization is symmetric with respect to all vertices, in contrast with Lemma~\ref{ns}, in which one vertex has to be used as a reference. Let $\one$ be the all-ones vector and $\one^{\perp}$ be the orthogonal complement of the span of $\one$. 
\begin{prop} \label{1and2}
    Let $G$ be a graph on $d+2$ vertices. Then $G$ is representable in $\rd$ with ratio $k$ if and only if the following two conditions hold.
    \begin{enumerate}[label={$(\arabic*)$}]
        \item For all vectors $w\in \one^{\perp}$ we have $w^TB_G(k)w \geq 0$.
        \item There exists a nonzero vector $w\in \one^{\perp}$ such that $B_G(k)w = \gamma \one$ for a real number $\gamma$.
    \end{enumerate} 

\end{prop}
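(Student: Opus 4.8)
The plan is to deduce \cref{1and2} from Schoenberg's criterion (\cref{ns}) by recognizing $B_G(k)$ as the negative of a squared-distance matrix and then rereading both conditions as statements about the quadratic form $-B_G(k)$ restricted to $\one^\perp$. After rescaling so that the shorter distance equals $1$, a graph $G$ is representable in $\rd$ with ratio $k$ exactly when there are points $p_1,\dots,p_{d+2}\in\rd$ whose matrix of squared distances $M$ has $M_{ij}=1$ on non-edges, $M_{ij}=k^2$ on edges, and $M_{ii}=0$; because $k>1$ the two values differ, so any such points genuinely form a $2$-distance set with associated graph $G$. By definition $B_G(k)=-M$, so I may phrase everything in terms of $M$.

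The crux is a single basis computation. Taking $\{e_i-e_1\}_{i=2}^{d+2}$ as a basis of $\one^\perp$ and expanding,
\[
(e_i-e_1)^{T}\bigl(-M\bigr)(e_j-e_1)=M_{1i}+M_{1j}-M_{ij}=b'_{ij},
\]
so Schoenberg's matrix $B'$ from \cref{ns} is exactly the Gram matrix of the form $-M=B_G(k)$ on $\one^\perp$ in this basis. Condition $(1)$, that $w^{T}B_G(k)w\ge 0$ for every $w\in\one^\perp$, therefore says precisely $B'\succeq 0$. For condition $(2)$ I would note that the radical of the form $-M|_{\one^\perp}$ is $\{w\in\one^\perp : Mw\in\operatorname{span}(\one)\}$; since $B_G(k)w=\gamma\one$ is the same as $Mw\in\operatorname{span}(\one)$, condition $(2)$ says exactly that this form is degenerate, and as the form lives on the $(d+1)$-dimensional space $\one^\perp$ with Gram matrix $B'$, degeneracy is equivalent to $\rank(B')\le d$. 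Thus conditions $(1)$ and $(2)$ together are equivalent, unconditionally, to $B'\succeq 0$ and $\rank(B')\le d$, and \cref{ns} converts this into the existence of the $p_i$ in $\rd$, i.e.\ representability.

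I expect the only real friction to be the reverse direction — manufacturing an actual point configuration in $\rd$ from the abstract conditions — together with the bookkeeping that lets the projection $P$ replace the reference vertex. A clean, reference-free way to carry this out is to pass to the centered Gram matrix $G_c=\tfrac12\,P B_G(k) P=-\tfrac12\,PMP$: using $Pw=w$ for $w\in\one^\perp$ one checks that condition $(1)$ is equivalent to $G_c\succeq 0$ and condition $(2)$ to $\rank(G_c)\le d$ (here $\one\in\ker G_c$ always, so a nonzero kernel vector lying in $\one^\perp$ exists iff $\rank(G_c)\le d$). Factoring $G_c=Q^{T}Q$ with $Q$ of size $\rank(G_c)\times(d+2)$ and taking the $p_i$ to be the columns of $Q$ gives points in $\rd$ with $\|p_i-p_j\|^{2}=(G_c)_{ii}+(G_c)_{jj}-2(G_c)_{ij}=M_{ij}$, the last equality again coming from the substitution $w=e_i-e_j\in\one^\perp$. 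This furnishes the construction that \cref{ns} otherwise supplies, and keeps the whole argument aligned with the matrix $P B_G(k) P$ used later in the paper.
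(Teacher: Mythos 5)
Your proposal is correct, and its core coincides with the paper's own proof: the paper forms $B_G' = (I-\one e_1^T)B_G(I - e_1\one^T)$ and changes variables $w = (I - e_1\one^T)v$, which is literally the same computation as your expansion $(e_i - e_1)^T B_G (e_j - e_1) = b'_{ij}$ in the basis $\{e_i - e_1\}$ of $\one^{\perp}$, and your identification of condition (2) with a nontrivial radical of the form $B_G|_{\one^{\perp}}$ is a cleaner conceptual repackaging of the paper's argument that $e_1 \in \operatorname{null}(B_G')$ and a second, independent null vector produces $w \in \one^{\perp}$ with $B_G w \in \operatorname{span}(\one)$. Where you genuinely depart is your final paragraph: passing to the double-centered matrix $G_c = \tfrac{1}{2}\,P B_G(k) P$ and factoring $G_c = Q^T Q$ constructs the point configuration directly (the classical multidimensional-scaling argument), so the reverse direction becomes self-contained instead of an appeal to the existence half of \cref{ns}; this also has the merit of putting $P B_G P$ on stage early, the very matrix the paper only reaches in \cref{mainres} and in the proof of \cref{lowestdimensionspherical}, where it asserts without detail that $\operatorname{null}(PB_GP)$ is $\operatorname{span}(\one)$ together with the space $S$ --- a fact your kernel computation ($PB_GPw = 0$ for $w\in\one^{\perp}$ iff $B_Gw\in\operatorname{span}(\one)$) establishes cleanly. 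One shared gloss worth flagging: points realizing distances $1$ and $k$ form a genuine 2-distance set with associated graph $G$ only if $G$ has at least one edge and one non-edge (otherwise only a single distance occurs); your remark that ``the two values differ'' does not quite cover this, but the paper's proof elides the same degenerate case, so it is not a defect relative to the original.
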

\begin{proof}
We define $B_G' = \left(I - \one e_1^T\right)B_G\left(I - e_1\one^T\right)$, which is equivalent to the definition in Lemma~\ref{ns}. Thus, $G$ is representable in $\rd$ if and only if 
$$B_G' \succeq 0 \text{ and } \rank(B_G') \leq d.$$
We show that these two conditions are equivalent to conditions \ref{eq: 1} and \ref{eq: 2}, respectively.

The condition $B_G' \succeq 0$ is equivalent to $v^T B_G' v \geq 0$ for all vectors $v$. This gives us
$$v^T \left(I - \one e_1^T\right)B_G\left(I - e_1\one^T\right) v \geq 0$$
$$	\Leftrightarrow w^TB_Gw \geq 0, \text{ where } w = \left(I - e_1\one^T\right) v.$$
Note that $w_1 = -\sum_{i = 2}^n v_{i}, w_2 = v_2, w_3 = v_3, \dots, w_{d+2} = v_{d+2},$
so $w$ is a vector in $\one^{\perp}$ and hence condition \ref{eq: 1} implies $B_G' \succeq 0$. Conversely, every vector in $\onep$ can be expressed in this form, so $B_G' \succeq 0$ implies condition  \ref{eq: 1} as well.

The condition $\rank(B_G') \leq d $ is equivalent to $\dim \operatorname{null}(B_G') \geq 2$. Observe that $e_1 \in \operatorname{null}(B_G')$, so there exists a vector $v$ independent with $e_1$ such that $\left(I - \one e_1^T\right)B_G\left(I - e_1\one^T\right) v = 0$. This means that there exists $w\perp \one$ such that $\left(I - \one e_1^T\right)B_Gw = 0$, so $B_Gw \in \operatorname{null} \left(I - \one e_1^T\right) = \operatorname{span}(\one)$, thus there exists a real number $\gamma$ such that $B_G w = \gamma \one$. Conversely, if there exists a vector satisfying condition \ref{eq: 2}, then it is in the nullspace of $B_G'$ and is independent of $e_1$, so $\rank (B_G') \leq d$. 
\end{proof}
\begin{corollary} \label{nullspace}
    The nullspace of $B'_G$ consists of the vector $e_1$ and the space of vectors $w\in \onep$ such that $B_G(k)w = \gamma \one$ for some real number $\gamma$.
\end{corollary}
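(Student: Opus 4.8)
The plan is to compute $\operatorname{null}(B_G')$ directly from the definition $B_G' = \left(I - \one e_1^T\right)B_G\left(I - e_1\one^T\right)$, reusing the two facts already extracted in the proof of \cref{1and2}: that $\operatorname{null}\!\left(I - \one e_1^T\right) = \operatorname{span}(\one)$, and that $I - e_1\one^T$ sends every vector into $\onep$ while fixing $\onep$ pointwise. Indeed $\left(I - e_1\one^T\right)v = v - (\one^T v)e_1$ has entries summing to zero, so it lands in $\onep$, and it equals $v$ whenever $\one^T v = 0$. Writing $W = \{w\in\onep : B_Gw = \gamma\one \text{ for some }\gamma\in\mathbb{R}\}$, the goal is then to show $\operatorname{null}(B_G') = \operatorname{span}(e_1)\oplus W$.

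First I would check the easy inclusion $\operatorname{span}(e_1) + W \subseteq \operatorname{null}(B_G')$. For $e_1$ this is immediate from $\left(I - e_1\one^T\right)e_1 = 0$. For $w\in W$, since $\one^T w = 0$ we have $\left(I - e_1\one^T\right)w = w$, so $B_G'w = \left(I - \one e_1^T\right)B_Gw = \gamma\left(I - \one e_1^T\right)\one = \gamma(\one - \one) = 0$, where the cross term vanishes because $e_1^T\one = 1$.

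For the reverse inclusion, take any $v\in\operatorname{null}(B_G')$ and set $w = \left(I - e_1\one^T\right)v \in \onep$, so that $v = w + (\one^T v)e_1$ with the second summand in $\operatorname{span}(e_1)$. It remains to place $w$ in $W$: from $0 = B_G'v = \left(I - \one e_1^T\right)B_Gw$ we get $B_Gw \in \operatorname{null}\!\left(I - \one e_1^T\right) = \operatorname{span}(\one)$, i.e. $B_Gw = \gamma\one$, so $w\in W$. Because $e_1\notin\onep$ we have $\operatorname{span}(e_1)\cap W = \{0\}$, so the sum is direct and the decomposition is exactly as claimed.

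I do not expect a genuine obstacle here: the substantive equivalence, namely that $B_G'v = 0$ (off the $e_1$ direction) corresponds to the condition $B_Gw = \gamma\one$, was already carried out inside the proof of \cref{1and2}, where it was used only to compare dimensions; this corollary merely records the \emph{full} nullspace rather than its dimension. The only care required is the bookkeeping — confirming that $v = w + (\one^T v)e_1$ is a genuine splitting and tracking the identities $e_1^T\one = 1$ and $\one^T w = 0$ so that the cross terms cancel.
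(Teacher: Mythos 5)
Your proof is correct and takes essentially the same approach as the paper: the paper states this corollary without a separate proof because the key computations are already inside the proof of \cref{1and2} (namely $e_1\in\operatorname{null}(B_G')$, $\operatorname{null}\!\left(I-\one e_1^T\right)=\operatorname{span}(\one)$, and the fact that $I-e_1\one^T$ maps onto and fixes $\onep$), and your argument re-executes exactly those steps. Your only addition is to make explicit the routine bookkeeping the paper leaves implicit --- the splitting $v = w + (\one^T v)e_1$ and the directness of $\operatorname{span}(e_1)\oplus W$ --- which is a faithful, slightly more careful write-up of the same argument.
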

We now consider the conditions for a graph to be spherical. Denote by $J$ the all-ones matrix.

\begin{lemma} \label{3and4}
Let $G$ be a graph on $d+2$ vertices that is representable in $\rd$ with ratio $k$. It has a spherical representation with ratio $k$ if and only if the following two conditions hold:
\begin{enumerate}[label={$(\arabic*)$}]
    \setcounter{enumi}{2}
    \item $\det(xJ + B_G)$ is the zero polynomial.
    \item There exists $r_0 = r_0(B_G)\in \mathbb{R}$ such that $rJ+B_G  \succeq 0$ for all $r\geq r_0$.
\end{enumerate} 
\end{lemma}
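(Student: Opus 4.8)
The plan is to reduce the existence of a spherical representation to a single positive-semidefinite condition on the one-parameter family $rJ + B_G$, and then read conditions $(3)$ and $(4)$ off that family. The guiding observation is that a symmetric matrix $M \succeq 0$ with constant diagonal is precisely the Gram matrix of a point configuration lying on a sphere centered at the origin. Concretely, I would first prove the correspondence: a representable $G$ has a spherical representation in $\rd$ with ratio $k$ if and only if there is an $r \in \mathbb{R}$ with $rJ + B_G \succeq 0$ and $\rank(rJ + B_G) \le d$. For the easy direction of this correspondence, given such an $r$ I factor $rJ + B_G$ as a Gram matrix of vectors $q_1,\dots,q_{d+2}\in\mathbb{R}^{\rank(rJ+B_G)}\subseteq\rd$; since the diagonal entries of $B_G$ vanish, every $\|q_i\|^2 = r$, so the points lie on a sphere, and $\|q_i-q_j\|^2 = -2b_{ij}$, which equals $2$ on non-edges and $2k^2$ on edges, giving a spherical $2$-distance set with ratio $k$ and graph $G$. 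Conversely, centering any spherical representation (scaled so $\alpha_2^2=2$) at its sphere center yields a Gram matrix of the form $R^2 J + B_G \succeq 0$ of rank at most $d$.

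For the forward direction of the lemma, suppose $r_\ast J + B_G \succeq 0$ with rank at most $d$. Since $(r-r_\ast)J \succeq 0$ for $r \ge r_\ast$, adding it preserves positive-semidefiniteness, so $rJ + B_G \succeq 0$ for all $r \ge r_\ast$, which is condition $(4)$ with $r_0 = r_\ast$. For condition $(3)$, the rank bound forces the kernel of $r_\ast J + B_G$ to have dimension at least two, so it meets the codimension-one subspace $\onep$ in a nonzero vector $w$. For this $w$ we have $Jw = 0$, hence $B_G w = (r_\ast J + B_G)w = 0$, and therefore $(xJ + B_G)w = 0$ for every $x$. Thus $xJ + B_G$ is singular for all $x$, i.e. $\det(xJ + B_G)$ is the zero polynomial.

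For the converse I would exploit that every eigenvalue of $rJ + B_G$ is nondecreasing in $r$, because passing from $r$ to $r' > r$ adds the positive-semidefinite matrix $(r'-r)J$. Consequently the set of $r$ with $rJ + B_G \succeq 0$ is an up-closed ray $[r_\ast,\infty)$; it is nonempty by condition $(4)$, and $r_\ast$ is finite because $\one^T(rJ+B_G)\one = r(d+2)^2 + \one^T B_G \one \to -\infty$ as $r\to-\infty$. On the boundary we have $\lambda_{\min}(r_\ast J + B_G) = 0$, for otherwise the matrix would stay positive-semidefinite for slightly smaller $r$. Condition $(3)$ says $\det(rJ+B_G)\equiv 0$, so $0$ is an eigenvalue for every $r$; for $r$ just below $r_\ast$ this persistent zero coexists with a strictly negative eigenvalue, and as $r \uparrow r_\ast$ the negative eigenvalue rises to $0$ while the persistent zero remains. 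Hence at $r = r_\ast$ the eigenvalue $0$ has multiplicity at least two, so $\rank(r_\ast J + B_G) \le d$ while $r_\ast J + B_G \succeq 0$, and the correspondence produces a spherical representation in $\rd$.

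The main obstacle is the final counting step in the converse: establishing that the boundary matrix $r_\ast J + B_G$ genuinely has nullity at least two, i.e. that the ``persistent'' zero eigenvalue guaranteed by condition $(3)$ is distinct from the eigenvalue that only reaches zero as the matrix enters the positive-semidefinite cone. This rests on the monotonicity and continuity of the eigenvalue branches under the rank-one perturbation $rJ$, together with the fact that, for $r<r_\ast$, the matrix carries a zero eigenvalue and a separate strictly negative one. Here I expect to lean on the representability hypothesis through \cref{1and2}: conditions $(1)$ and $(2)$ force the compression of $B_G$ to $\onep$ to be positive-semidefinite and singular, which pins the limiting behavior of the smallest eigenvalue and makes $\det(rJ+B_G)$ constant in $r$, so that condition $(3)$ reduces to $\det B_G = 0$. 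Verifying that these facts combine to yield exactly nullity two at $r_\ast$ — rather than one or three — is the delicate part of the argument.
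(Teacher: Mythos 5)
Your proposal is correct, and it takes a genuinely different route from the paper's. The paper proves both directions by adjoining an auxiliary apex point $O_1$ equidistant from all $d+2$ points and applying Schoenberg's criterion (Lemma~\ref{ns}) to the augmented configuration in $\mathbb{R}^{d+1}$: there $rJ+B_G$ appears as the matrix $B'$ with the apex as reference vertex, condition (3) is obtained by varying the apex height over an interval so that the polynomial $\det(2x_1^2J+B_G)$ vanishes identically, and in the converse the representability hypothesis is used to intersect the resulting sphere in $\mathbb{R}^{d+1}$ with the $d$-dimensional flat containing the set. You instead read $rJ+B_G$ directly as a Gram matrix of points of common norm $\sqrt{r}$, which makes ``spherical in $\rd$ with ratio $k$'' literally equivalent to the existence of some $r$ with $rJ+B_G\succeq 0$ and $\rank(rJ+B_G)\leq d$. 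This buys you two things: your forward direction is cleaner than the paper's (a kernel vector $w\in\onep$ of $r_\ast J+B_G$ satisfies $(xJ+B_G)w=0$ for every $x$ at once, with no polynomial-identity argument), and your converse extracts the rank drop at the boundary of the PSD ray, producing a configuration in $\rd$ directly --- so representability is needed only to exclude the degenerate complete/empty cases (where both distances might not occur), whereas the paper needs it to descend from $\mathbb{R}^{d+1}$ to $\rd$. You also get the identification of $r_\ast$ with the squared circumradius (under the scaling $\alpha_2=\sqrt{2}$) as a byproduct.

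The step you flag as delicate is in fact fine, and in a stronger form than you fear: you only need nullity \emph{at least} two at $r_\ast$ (rank $\leq d$), not exactly two, so the worry about ``one or three'' dissolves. To rigorize without tracking eigenvalue branches, let $\lambda_1(r)\geq\dots\geq\lambda_{d+2}(r)$ be the ordered eigenvalues of $M(r)=rJ+B_G$, which are continuous in $r$. For every $r<r_\ast$ (such $r$ exist since you showed $r_\ast$ is finite), condition (3) gives $0$ as an eigenvalue of $M(r)$ while $\lambda_{d+2}(r)<0$ by definition of $r_\ast$; hence the zero eigenvalue sits in position $j\leq d+1$, forcing $\lambda_{d+1}(r)\leq 0$. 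Letting $r\uparrow r_\ast$ gives $\lambda_{d+1}(r_\ast)\leq 0$, while $M(r_\ast)\succeq 0$ gives $\lambda_{d+1}(r_\ast)\geq 0$, so $\lambda_{d+1}(r_\ast)=\lambda_{d+2}(r_\ast)=0$ and the nullity is at least $2$, as required. Your proposed fallback through \cref{1and2} and the reduction of condition (3) to $\det B_G=0$ (which parallels the paper's Lemma~\ref{remr}) is therefore unnecessary.
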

\begin{proof}
    We first show the only if direction. Suppose that $G$ is a graph on $d+2$ vertices which can be representable in $\rd$ with ratio $k$, and its corresponding 2-distance set is spherical. Take the center $O$ of the sphere and take a point $O_1$ on the line in $\mathbb{R}^{d+1}$ through $O$ that is orthogonal to the subspace $\rd$ where $G$ is representable. The distance between every point from the 2-distance set and $O_1$ is the same and is denoted by $x_1$. Note that we can choose any $x_1\geq \sqrt{r_0/2}$ where $r_0$ is the radius of the circumsphere of the 2-distance set that corresponds to $G$. By Lemma~\ref{ns}, we see that $B' = 2x_1^2J+B_G \succeq 0$ for all $x_1 \geq \sqrt{r_0/2}$, so condition \ref{eq: 4} holds for $r\geq r_0$, and $\rank(B') \leq d+1$, i.e., $\det(B') = 0$. Since $\det(B')$ is a polynomial in $x_1$ which vanishes for all $x_1 \geq \sqrt{r_0/2}$, it must be the zero polynomial, so condition \ref{eq: 3} holds.
    
    We now show the if direction. Suppose that $G$ is representable in $\rd$ with ratio $k$ and conditions \ref{eq: 3} and \ref{eq: 4} hold. Take $r = r_0$. Since $rJ + B_G$ is positive semidefnite and $\rank(rJ+B_G) \leq d+1$, by Lemma~\ref{ns} the 2-distance set corresponding to $G$ is inscribed in a sphere with radius $r$ in $\mathbb{R}^{d+1}$ but it also lies on a subspace isomorphic to $\rd$, so it lies in the intersection of the sphere and an $d$-dimensional hyperplane, which is isomorphic to a sphere in $\rd$. This means that the 2-distance set corresponding to $G$ is spherical.
\end{proof}
By combining Proposition~\ref{1and2} and Lemma~\ref{3and4}, we obtain a set of four conditions that is necessary and sufficient for $G$ to be representable in $\rd$ with ratio $k$ using only the matrix $B_G$. 
\begin{enumerate}[label={(\arabic*)}]
    \item \label{eq: 1} For all vectors $w\in \one^{\perp}$ we have $w^TB_Gw \geq 0$.
    \item \label{eq: 2} There exists a nonzero vector $w\in \one^{\perp}$ such that $B_Gw = \gamma \one$ for a real number $\gamma$.
    \item \label{eq: 3} $\det(xJ + B_G)$ is the zero polynomial. 
    \item \label{eq: 4} There exists $r_0 = r_0(B_G)\in \mathbb{R}$ such that $rJ+B_G  \succeq 0$ for all $r\geq r_0$.
\end{enumerate}

We first simplify condition~\ref{eq: 3}:

\begin{lemma} \label{remr}
If a graph $G$ on $d+2$ vertices with a distance ratio $k$ satisfies condition \ref{eq: 2}, then \ref{eq: 3} holds if and only if $\det(B_G) = 0$.
\end{lemma}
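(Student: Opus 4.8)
The plan is to first pin down the polynomial structure of $\det(xJ + B_G)$. Since $J = \one\one^T$ has rank one, adding $xJ$ perturbs $B_G$ by a rank-one matrix, so expanding the determinant multilinearly in the rows (any term that replaces two or more rows by the common vector $x\one^T$ vanishes) shows that $\det(xJ + B_G)$ is an affine function of $x$, say $\det(xJ + B_G) = \det(B_G) + cx$ for some constant $c$. The ``only if'' direction is then immediate: if this polynomial is identically zero, evaluating at $x = 0$ gives $\det(B_G) = 0$. The entire content lies in the converse, where I must show that condition \ref{eq: 2} together with $\det(B_G) = 0$ forces the linear coefficient $c$ to vanish as well.

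For the converse I would avoid computing $c$ directly and instead show that $xJ + B_G$ is singular for \emph{every} real $x$, which immediately forces $\det(xJ + B_G)$ to be the zero polynomial. Condition \ref{eq: 2} supplies a nonzero $w \in \onep$ with $B_G w = \gamma\one$, and $\det(B_G) = 0$ supplies a nonzero $u$ with $B_G u = 0$. The key computation is that for any scalars $\alpha, \beta$,
\[
    (xJ + B_G)(\alpha w + \beta u) = \bigl(x\beta\,\one^T u + \alpha\gamma\bigr)\one,
\]
where I have used $Jw = \one(\one^T w) = 0$ because $w \perp \one$, together with $B_G w = \gamma\one$ and $B_G u = 0$. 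Thus a vector $\alpha w + \beta u$ lies in the kernel of $xJ + B_G$ as soon as the single scalar $x\beta\,\one^T u + \alpha\gamma$ is zero.

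It then remains to choose, for each fixed $x$, a pair $(\alpha,\beta) \neq (0,0)$ making that scalar vanish while keeping $\alpha w + \beta u \neq 0$. If $w$ and $u$ are linearly independent, any nonzero $(\alpha,\beta)$ yields a nonzero vector, and the single homogeneous linear equation $x\beta\,\one^T u + \alpha\gamma = 0$ in two unknowns always admits a nonzero solution, producing the desired kernel vector for every $x$. The degenerate case, which I expect to be the only real subtlety, is when $w$ and $u$ are dependent: then $u = cw$ with $c \neq 0$, so $0 = B_G u = c\gamma\one$ forces $\gamma = 0$, and now $w$ itself satisfies $(xJ + B_G)w = 0$ for all $x$. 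In either case $xJ + B_G$ is singular for all $x$, so $\det(xJ + B_G) \equiv 0$, which is condition \ref{eq: 3}.

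An alternative to this case analysis would be to write $c = \one^T \operatorname{adj}(B_G)\,\one$ via Jacobi's formula and argue by the rank of $B_G$: if $\rank(B_G) \le d$ the adjugate vanishes outright, while if $\rank(B_G) = d+1$ then $\operatorname{adj}(B_G) = \mu\,u u^T$ for a spanning null vector $u$, and condition \ref{eq: 2} forces either $\one$ orthogonal to $u$ (when $\gamma = 0$, giving $u \in \onep$) or $\one \in \operatorname{im}(B_G) = \ker\operatorname{adj}(B_G)$ (when $\gamma \neq 0$), so that $\one^T\operatorname{adj}(B_G)\one = 0$ in both situations. I would nonetheless favor the direct null-vector argument above, since it is self-contained and sidesteps invoking the adjugate and Jacobi's formula.
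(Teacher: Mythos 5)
Your proposal is correct and follows essentially the same route as the paper: both construct, for each fixed $x$, a nonzero null vector of $xJ + B_G$ inside $\operatorname{span}(w,u)$ via the computation $(xJ+B_G)(\alpha w + \beta u) = (x\beta\,\one^T u + \alpha\gamma)\one$, with the paper splitting on $\gamma = 0$ versus $\gamma \neq 0$ where you split on linear dependence of $w$ and $u$ --- the same degenerate case in disguise, since dependence forces $\gamma = 0$ exactly as in the paper's non-collinearity argument. The affine-in-$x$ observation and the adjugate alternative are fine but unnecessary extras.
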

\begin{proof}

It is clear that condition \ref{eq: 3} implies $\det(B_G) = 0$, so we only need to show the converse.

Assume that $\det(B_G) = 0$. Thus, there exists a vector $u \neq 0$ such that $B_Gu = 0$. From \ref{eq: 2}, there exists $w\in \one^{\perp}$ such that $B_Gw = \gamma \one$. It therefore suffices to show that for every $x$, there exists a nonzero vector $v$ such that $(xJ+B_G)v = 0$. 

First, if $\gamma = 0$, then $(xJ+B_G)w = 0$ for every $x$. Otherwise, take $\beta = -\frac{x \langle u, 1 \rangle}{\gamma}$. Then,
    \begin{align*}
        (xJ+B_G)(u+\beta w) & = xJu + \beta xJw + B_Gu + \beta B_Gw\\
        & = x \langle u, \one \rangle \one + \beta x \mathbf{0} + \mathbf{0} + \beta \gamma \one \\
        & = (x \langle u, \one \rangle + \beta \gamma)\one  = 0.
    \end{align*}
Note that $u+\beta w \neq 0$ because otherwise $u$ and $w$ would have been collinear, which gives us $Bw = 0$, so $\gamma = 0$. Thus, for every $x$ there exists a nonzero vector $v$ such that $(xJ+B_G)v = 0$, as needed.
\end{proof}
\section{Second eigenvalue of the adjacency matrix}\label{secondeigen}

From Lemma~\ref{remr}, we can find a formula for $B_G$ which does not depend on the ratio of the 2-distance set. Conversely, if this formula for $B_G$ holds, then we prove that condition \ref{eq: 1} implies condition \ref{eq: 2} and $\det(B_G) = 0$.  

First, we transform the matrix $B_G$ to a new matrix $\overline{B_G}$ which is more closely related to the adjacency matrix $A_G$ and rewrite conditions \ref{eq: 1}-\ref{eq: 4} in terms of $\bg$.

\begin{prop} \label{prop:newconds}
A graph $G$ on $d+2$ vertices is spherical in $\mathbb{R}^d$ with distance ratio $k$ if and only if the following conditions hold for $\bg = \bg(k) = \frac{1}{k^2-1} (B_G+J) = \frac{1}{k^2-1}I - A_G$:
\begin{enumerate}[label={$\overline{(\arabic*)}$}]
    \item \label{1bar} For all $w \in \onep$ we have $w^T\bg w \geq 0$.
    \item \label{2bar} There exists a nonzero vector $w\in \one^{\perp}$ such that $\bg w =\gamma \one$ for some real number $\gamma$.
    \item \label{3bar} $\det(\bg) = 0$.
    \item \label{4bar}There exists $r_0\in \mathbb{R}$ such that for all $r\geq r_0$ we have $rJ + \bg \succeq 0$.
\end{enumerate}
\end{prop}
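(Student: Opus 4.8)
The plan is to show that the four barred conditions are, term by term, equivalent to the four unbarred conditions \ref{eq: 1}--\ref{eq: 4}, since by \cref{1and2} and \cref{3and4} the latter already characterize spherical representability in $\rd$ with ratio $k$. The whole argument rests on two elementary facts about the substitution $\bg = c(B_G+J)$ with $c := \frac{1}{k^2-1}$: first, that $c>0$ because the distance ratio satisfies $k>1$; and second, that $Jw = 0$ for every $w\in\onep$, so that $J$ disappears whenever we test against vectors orthogonal to $\one$. I would open by recording $c>0$ and verifying the claimed entrywise identity $\frac{1}{k^2-1}(B_G+J)=\frac{1}{k^2-1}I-A_G$ by a direct comparison of the three cases $i=j$, $\{i,j\}\notin E$, and $\{i,j\}\in E$.

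Conditions \ref{1bar}, \ref{2bar}, and \ref{4bar} would then follow quickly. For \ref{1bar}, since $w\in\onep$ gives $w^TJw=(\one^Tw)^2=0$, we get $w^T\bg w = c\,w^TB_Gw$, and as $c>0$ this is nonnegative exactly when \ref{eq: 1} holds. For \ref{2bar}, again $Jw=0$ yields $\bg w = cB_Gw$ for $w\in\onep$, so $\bg w=\gamma\one$ is equivalent to $B_Gw=(\gamma/c)\one$, matching \ref{eq: 2}. For \ref{4bar}, I would write $rJ+\bg = (r+c)J + cB_G = c\big(\tfrac{r+c}{c}J + B_G\big)$ and observe that $r\mapsto \tfrac{r+c}{c}$ is an increasing affine bijection of a ray onto a ray; since $c>0$, positive semidefiniteness of $rJ+\bg$ for all large $r$ is equivalent to that of $sJ+B_G$ for all large $s$, i.e. to \ref{eq: 4}.

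The one genuinely delicate point, and the step I expect to be the main obstacle, is condition \ref{3bar}. Here the shift by $J$ matters: $\det\bg = c^{d+2}\det(B_G+J)$ involves $B_G+J$, not $B_G$, so \ref{3bar} does not reduce to $\det(B_G)=0$ by mere scaling. The forward implication is easy: condition \ref{eq: 3} says $\det(xJ+B_G)$ is identically zero, so evaluating at $x=1$ gives $\det(B_G+J)=0$ and hence $\det\bg=0$. For the converse I would use condition \ref{2bar} (available since it is equivalent to \ref{eq: 2}) to run the argument of \cref{remr} with $\bg$ in place of $B_G$: from $\det\bg=0$ pick $u\neq0$ with $\bg u=0$, from \ref{2bar} pick $w\in\onep$ with $\bg w=\gamma\one$, and then exhibit for every $x$ a nonzero vector in $\operatorname{span}\{u,w\}$ annihilated by $xJ+B_G$ (using $B_Gu=-\langle u,\one\rangle\one$ and $B_Gw=(\gamma/c)\one$, which follow from $B_G=\tfrac1c\bg-J$). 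This forces $\det(xJ+B_G)\equiv0$, i.e. \ref{eq: 3}. Structurally this works because $B_G$ and $\bg$ play symmetric roles: each is a positive multiple of the other plus a multiple of $J$, so \cref{remr} applies equally to either matrix.

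Finally I would assemble the pieces: the equivalences \ref{eq: 1}$\Leftrightarrow$\ref{1bar}, \ref{eq: 2}$\Leftrightarrow$\ref{2bar}, \ref{eq: 3}$\Leftrightarrow$\ref{3bar}, and \ref{eq: 4}$\Leftrightarrow$\ref{4bar} show that the barred system holds if and only if \ref{eq: 1}--\ref{eq: 4} hold, which by \cref{1and2} and \cref{3and4} happens if and only if $G$ is spherical in $\rd$ with ratio $k$. I would take care that condition \ref{eq: 2} (equivalently \ref{2bar}) is genuinely in force whenever I invoke it to bridge \ref{eq: 3} and \ref{3bar}, in both directions of the overall biconditional.
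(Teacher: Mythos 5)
Your proposal is correct and follows essentially the same route as the paper's proof: a term-by-term equivalence of \ref{eq: 1}--\ref{eq: 4} with \ref{1bar}--\ref{4bar}, using $Jw = 0$ for $w \in \onep$ to handle the first two, the positive affine reparametrization $s = r(k^2-1)+1$ for the fourth, and a rerun of the Lemma~\ref{remr} kernel argument (legitimate since $B_G$ and $\bg$ differ by a positive scalar and a multiple of $J$) for the third. Your explicit construction of a nonzero vector in $\operatorname{span}\{u,w\}$ annihilated by $xJ + B_G$ is precisely what the paper compresses into the remark ``by the same proof as in Lemma~\ref{remr}.''
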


\begin{proof}
It suffices to show that the conditions \ref{1bar}, \ref{2bar}, \ref{3bar}, and \ref{4bar} are equivalent to conditions \ref{eq: 1}, \ref{eq: 2}, \ref{eq: 3}, and \ref{eq: 4} respectively. 
The first two are clearly equivalent to the original conditions because $\bg w = \frac{1}{k^2-1}(B_G+J)w = \frac{1}{k^2-1} B_G w$ for all $w \in \onep$. 
Condition \ref{eq: 3} is equivalent to $\det(xJ +\bg) \equiv 0$ by a change of variable: \[\det \left(xJ + \frac{1}{k^2 - 1}(B_G + J) \right) = \left(\frac{1}{k^2-1}\right)^{d+2} \det \big((x(k^2-1)+1)J + B_G\big).\] By the same proof as in Lemma~\ref{remr} we see that this is also equivalent to condition \ref{3bar}. 
Condition \ref{4bar} is equivalent to $B_G + (r(k^2-1)+1)J \succeq 0$ for sufficiently large $r$, which is the same as condition \ref{eq: 4}.
\end{proof}

We now find a formula for $\bg$ in terms of the second eigenvalue of the adjacency matrix. This also gives us a formula for $B_G$ that does not depend on $k$.
\begin{lemma} \label{lbd2}
Let $G$ be a graph on $d+2$ vertices. Suppose that it satisfies conditions \ref{1bar},  \ref{2bar}, and \ref{3bar} for some $k>1$, then 
$B_G = I - \frac{1}{\lambda_2}A_G -J $.
\end{lemma}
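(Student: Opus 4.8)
The plan is to reduce the entire statement to the single claim that $\frac{1}{k^2-1} = \lambda_2$. Once this is in hand the formula is immediate: from $\bg = \frac{1}{k^2-1}I - A_G$ one gets $B_G = (k^2-1)\bg - J = I - (k^2-1)A_G - J$, and substituting $k^2-1 = 1/\lambda_2$ yields $B_G = I - \frac{1}{\lambda_2}A_G - J$. Note that $\lambda_2 = \frac{1}{k^2-1} > 0$ since $k>1$, so dividing by $\lambda_2$ is legitimate. Thus all the work is in pinning down which eigenvalue of $A_G$ equals $\frac{1}{k^2-1}$.

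The starting point is that $A_G$ and $\bg$ share an eigenbasis, so the eigenvalues of $\bg$ are $\nu_i := \frac{1}{k^2-1} - \lambda_i$; because the $\lambda_i$ are nonincreasing these are nondecreasing, and the target $\frac{1}{k^2-1} = \lambda_2$ is exactly $\nu_2 = 0$. First I would record that $G$ is representable, since the matrix $B_G$ is only defined in that case; hence by Proposition~\ref{1and2} (together with Proposition~\ref{prop:newconds}) condition \ref{1bar} holds, i.e. $w^T\bg w \geq 0$ for all $w \in \onep$. As $\onep$ has codimension one, an interlacing argument shows $\bg$ has at most one negative eigenvalue, so $\nu_2 \geq 0$. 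Meanwhile condition \ref{3bar} gives $\det(\bg) = 0$, so $\nu_j = 0$ for some index $j$.

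I would then split on $j$. If $j \geq 2$, then $0 = \nu_j \geq \nu_2 \geq 0$ forces $\nu_2 = 0$ and we are done. The only remaining possibility is $\nu_1 = 0 < \nu_2$, and excluding it is where condition \ref{2bar} and Perron--Frobenius enter. In that case $\bg \succeq 0$ and $\ker\bg$ equals the $\lambda_1$-eigenspace of $A_G$; since $\lambda_1 = \frac{1}{k^2-1} > \lambda_2$ this eigenvalue is simple, and Perron--Frobenius applied to the nonnegative matrix $A_G$ lets me take its eigenvector $u_1$ entrywise nonnegative, so $\langle u_1, \one\rangle > 0$. Now condition \ref{2bar} supplies a nonzero $w \in \onep$ with $\bg w = \gamma\one$; pairing with $w$ gives $w^T\bg w = \gamma\langle w,\one\rangle = 0$, and positive semidefiniteness forces $\bg w = 0$, whence $w \in \ker\bg = \operatorname{span}(u_1)$. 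This contradicts $w \perp \one$, so the case cannot occur. Therefore $\nu_2 = 0$, giving $\lambda_2 = \frac{1}{k^2-1}$ and the desired formula.

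I expect the interlacing bound $\nu_2 \geq 0$ and the case $j \geq 2$ to be routine. The main obstacle is excluding $\nu_1 = 0 < \nu_2$, that is, ruling out that $\frac{1}{k^2-1}$ is a strictly dominant eigenvalue $\lambda_1 > \lambda_2$; the decisive inputs there are that the Perron eigenvector has nonzero inner product with $\one$ and that condition \ref{2bar} manufactures a null vector of $\bg$ lying inside $\onep$.
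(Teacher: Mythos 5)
Your proposal is correct and takes essentially the same route as the paper's proof: both derive $\frac{1}{k^2-1} \geq \lambda_2$ from condition \ref{1bar} (which, as you rightly note, must be imported from representability via Proposition~\ref{1and2}, since it is not among the lemma's stated hypotheses), use condition \ref{3bar} to force $\frac{1}{k^2-1}$ to be an eigenvalue of $A_G$, and combine condition \ref{2bar} with the Perron--Frobenius nonnegativity of the top eigenvector to exclude $\frac{1}{k^2-1} = \lambda_1 > \lambda_2$. The only cosmetic differences are that you obtain the lower bound by interlacing rather than the Courant--Fischer min-max principle, and you rule out the bad case via the standard fact that $w^T\bg w = 0$ with $\bg \succeq 0$ forces $\bg w = 0$ (placing $w$ in the one-dimensional $\lambda_1$-eigenspace, contradicting $w \perp \one$), where the paper instead uses the equality characterization of the Rayleigh quotient.
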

\begin{proof}
Let $m = \frac{1}{k^2-1}$, so $\bg = mI - A_G$. Since $\det(\bg) = 0$, so there exists an eigenvector $v$ of $A_G$ that has eigenvalue $m = \lambda_i$ for some $i$. By condition \ref{1bar}, $x^T\bg x \geq 0$ for $x\in \onep$ which is equivalent to $x^TA_G x \leq mx^Tx$, i.e., $\frac{x^TA_Gx}{x^Tx} \leq m$ for every $x\in \onep$. So, by the Courant–Fischer–Weyl min-max principle \cite{horn2012matrix} we have
    $$\lambda_2 = \min_{\substack{U \\ \dim(U) = d-1}} \left\{ \max_{x\in U} \frac{x^TA_Gx}{x^Tx} \right\} \leq \max_{x\in \one^{\perp} } \frac{x^TA_Gx}{x^Tx} \leq m.$$
By condition \ref{2bar}, there exists a vector $x_1 \in \one^{\perp}$ such that $x_1^T(mI - A_G)x_1 = 0$ or equivalently $m = \frac{x_1^TA_Gx_1}{x_1^Tx_1}$. But $m = \lambda_i$ for some $i$, so $m\leq \lambda_1$.

If $\lambda_1 = \lambda_2$, then $m = \frac{x_1^TA_G x_1}{x_1^Tx_1}\leq \lambda_1 = \lambda_2$ by the min-max principle.

If $\lambda_1 > \lambda_2$, then $v_1 \not \in \onep$ because $\lambda_1$'s eigenvector $v_1$ has nonnegative coordinates. Since $\frac{x^TA_Gx}{x^Tx} = \lambda_1$ holds if and only if $x\in \operatorname{span}(v_1)$ and $x_1 \not \in \operatorname{span} (v_1)$, so $m < \lambda_1$ thus $m \leq \lambda_2$.

Therefore $\lambda_2 \leq m \leq \lambda_2$, so $m = \lambda_2$. Undoing the transformations of $B_G$, we obtain $B_G = I - \frac{1}{\lambda_2}A_G -J$, as needed.
\end{proof}

This gives us a formula for $k$ in terms of $\lambda_2$.

\begin{corollary}
Given graph $G$ on $d+2$ vertices. If $G$ is spherical, then the distance ratio is $\sqrt{\frac{1}{\lambda_2} + 1}$. (In particular, this implies that $\lambda_2 > 0$.)
\end{corollary}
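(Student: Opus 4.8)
The plan is to read the distance ratio straight off the computation already performed in the proof of \cref{lbd2}, so almost no new work is required. First I would use \cref{prop:newconds} to unpack the hypothesis: saying that $G$ is spherical means precisely that there is some $k>1$ for which the four conditions \ref{1bar}--\ref{4bar} hold for $\bg=\bg(k)=\frac{1}{k^2-1}I-A_G$. In particular conditions \ref{2bar} and \ref{3bar} are satisfied for this $k$, which are exactly the hypotheses of \cref{lbd2}, so that lemma (and its proof) applies to our situation.

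Next I would recall the substitution $m=\frac{1}{k^2-1}$ introduced in that proof, under which $\bg=mI-A_G$. The crucial output I want is not the closed form for $B_G$ itself but the intermediate equality $m=\lambda_2$ established along the way: this is exactly the squeeze $\lambda_2\le m\le\lambda_2$ proven there from conditions \ref{1bar} and \ref{2bar} via the Courant--Fischer--Weyl min-max principle. Given this, I simply set $\frac{1}{k^2-1}=\lambda_2$ and solve for $k$, obtaining $k^2=\frac{1}{\lambda_2}+1$ and hence $k=\sqrt{\frac{1}{\lambda_2}+1}$; the positive square root is the correct one since a distance ratio satisfies $k>1>0$. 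For the parenthetical claim $\lambda_2>0$, I would observe that $k>1$ forces $k^2-1>0$, so $m=\frac{1}{k^2-1}>0$, and since $m=\lambda_2$ this gives $\lambda_2>0$ at once (equivalently, one reads it off the final formula: $\sqrt{\frac{1}{\lambda_2}+1}$ can only be a real number exceeding $1$ when $\frac{1}{\lambda_2}>0$).

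I do not expect any real obstacle here, as the corollary is a direct algebraic consequence of the equality $m=\lambda_2$ that the preceding lemma already delivers. The only point that deserves a sentence of care is to make explicit that it is this equality, rather than merely the resulting expression for $B_G$, that I am extracting from the proof of \cref{lbd2}, and to confirm that the recovered value of $k$ genuinely satisfies $k>1$ so that it is a legitimate distance ratio.
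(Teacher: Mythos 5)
Your proof is correct and takes essentially the same route as the paper: the paper's proof likewise reads off the equality $\frac{1}{k^2-1} = m = \lambda_2$ established in the proof of \cref{lbd2} and solves for $k$. Your additional care in noting that it is this intermediate equality (not the formula for $B_G$) being extracted, and that $k>1$ forces $\lambda_2>0$, simply makes explicit what the paper leaves implicit in the parenthetical.
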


\begin{proof}
From the proof of Lemma~\ref{lbd2}, we have $\frac{1}{k^2 - 1} = m = \lambda_2$, so we have $k = \sqrt{\frac{1}{\lambda_2} + 1}$.
\end{proof}

Therefore, we can rephrase Proposition \ref{prop:newconds} with the definition $\bg = \lambda_2 I - A_G$ instead of the original definition, which removes the need to consider $k$ altogether.

Using Lemma~\ref{lbd2} we can also prove that one of the eigenvectors of $\lambda_2$ is in $\onep$.

\begin{lemma}\label{v2}
Let $G$ be a graph on $d+2$ vertices with ratio $k$ that satisfies conditions \ref{1bar}, \ref{2bar}, and \ref{3bar}. 
There exists an eigenvector  $v_2$ of $\lambda_2$ that is orthogonal to $\one$.
\end{lemma}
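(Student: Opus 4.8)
The plan is to leverage condition \ref{2bar} together with the identity $\bg = \lambda_2 I - A_G$ from Lemma~\ref{lbd2}, which applies since our hypotheses include conditions \ref{1bar}, \ref{2bar}, and \ref{3bar}. Condition \ref{2bar} furnishes a nonzero vector $w \in \onep$ with $\bg w = \gamma\one$, and substituting the formula for $\bg$ this becomes $(\lambda_2 I - A_G)w = \gamma \one$, i.e.
\[
(A_G - \lambda_2 I)\,w = -\gamma\one .
\]
First I would split the argument according to whether the scalar $\gamma$ vanishes. Let $E$ denote the eigenspace of $A_G$ for the eigenvalue $\lambda_2$; note that $E \neq \{0\}$ because condition \ref{3bar} gives $\det(\bg) = \det(\lambda_2 I - A_G) = 0$, so $\lambda_2$ is genuinely an eigenvalue.

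The case $\gamma = 0$ is immediate: then $A_G w = \lambda_2 w$, so the vector $w$ itself is an eigenvector of $A_G$ for $\lambda_2$, and it already lies in $\onep$ by construction, so we may take $v_2 = w$. The substantive case is $\gamma \neq 0$. Here the key observation is that $A_G$ is symmetric, hence so is $A_G - \lambda_2 I$, and for a symmetric matrix the image is the orthogonal complement of the kernel. Since $\ker(A_G - \lambda_2 I) = E$, we have $\operatorname{Image}(A_G - \lambda_2 I) = E^{\perp}$. The displayed equation exhibits $-\gamma\one$ as an element of this image, so $-\gamma\one \in E^{\perp}$; because $\gamma \neq 0$, this forces $\one \in E^{\perp}$, i.e. $\one$ is orthogonal to every vector of $E$. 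Consequently $E \subseteq \onep$, and choosing any nonzero $v_2 \in E$ produces an eigenvector of $\lambda_2$ orthogonal to $\one$, as desired.

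I expect the only delicate point to be the identification $\operatorname{Image}(A_G - \lambda_2 I) = E^{\perp}$ for the symmetric matrix $A_G - \lambda_2 I$, which is what converts the solvability of $(A_G - \lambda_2 I)w = -\gamma\one$ into the orthogonality $\one \perp E$ that drives the $\gamma \neq 0$ case. Everything else is bookkeeping: translating condition \ref{2bar} through the formula of Lemma~\ref{lbd2} and reading off the two cases. No appeal to condition \ref{4bar} is needed, and the nonvanishing of $w$ is used only implicitly, since in the $\gamma \neq 0$ branch the conclusion comes from the eigenspace $E$ rather than from $w$ directly.
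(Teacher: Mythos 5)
Your proof is correct, but it takes a genuinely different route from the paper's. You argue from condition \ref{2bar}: after substituting $\bg = \lambda_2 I - A_G$ (Lemma~\ref{lbd2}, legitimately available since all three conditions are hypotheses), the witness $w \in \onep$ satisfies $(A_G - \lambda_2 I)w = -\gamma\one$, and you split on $\gamma$. If $\gamma = 0$, then $w$ itself is the desired eigenvector; if $\gamma \neq 0$, the identity $\operatorname{Image}(M) = \ker(M)^{\perp}$ for symmetric $M$ forces $\one \perp E$, where $E$ is the $\lambda_2$-eigenspace --- and $E \neq \{0\}$ holds automatically, since $\lambda_2$ is by definition an eigenvalue of $A_G$, so your appeal to \ref{3bar} at that point is superfluous though harmless. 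The paper instead argues from condition \ref{1bar}: it takes orthonormal eigenvectors $v_1, v_2$ for $\lambda_1, \lambda_2$, picks a nonzero combination $\alpha v_1 + \beta v_2 \in \onep$, and evaluates the quadratic form, using $\bg v_2 = 0$ to collapse it to $0 \leq \alpha^2(\lambda_2 - \lambda_1)$, whence $\alpha = 0$ (so $v_2 \in \onep$) or $\lambda_1 = \lambda_2$ (so the eigenspace has dimension at least $2$ and meets $\onep$ nontrivially). The trade-off: the paper's computation is what powers Corollary~\ref{cor2} --- when $\lambda_1 > \lambda_2$ the \emph{entire} eigenspace of $\lambda_2$ lies in $\onep$, and when $\lambda_1 = \lambda_2$ all but one basis vector can be chosen in $\onep$ --- a stronger localization that the proof of Theorem~\ref{mainres} later uses; your argument recovers $E \subseteq \onep$ only in the $\gamma \neq 0$ branch, while the $\gamma = 0$ branch yields just the single eigenvector $w$, so it proves the lemma as stated but would not substitute for the paper's proof as the source of Corollary~\ref{cor2}. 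What your route buys in exchange is economy and transparency: condition \ref{1bar} enters only through Lemma~\ref{lbd2}, the quadratic-form expansion is replaced by a one-line Fredholm-type duality, and the case structure makes explicit exactly where the scalar $\gamma$ from condition \ref{2bar} matters.
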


\begin{proof}
Let $v_1$ be an eigenvector of $\lambda_1$ and $v_2$ be an eigenvector of $\lambda_2$ such that $\langle v_1, v_2 \rangle = 0$ and let $|| v_1|| = ||v_2|| = 1$. The subspace $\operatorname{span}(v_1, v_2)$ has dimension 2, so it intersects $\one^{\perp}$ in a line or a plane. This means that there exist real numbers $\alpha$ and $\beta$ such that $\alpha v_1 + \beta v_2 \in \one^{\perp}$ (where $\alpha$ and $\beta$ are not both equal to $0$). The matrix $\bg = \lambda_2 I - A_G$ by Lemma~\ref{lbd2}, so it has $v_2$ in its nullspace. By condition~\ref{1bar},
\begin{align*}
    0 & \leq (\alpha v_1 + \beta v_2)^T \bg (\alpha v_1 + \beta v_2)\\
    & = \alpha^2 v_1^T\bg v_1 + \beta^2 v_2^T\bg v_2+ \alpha \beta (v_1^T\bg v_2 + v_2^T\bg v_1) \\
    & = \alpha^2 (\lambda_2 v_1^T v_1 - v_1^TA_Gv_1) + \beta^2 v_2^T\bg v_2 + \alpha \beta v_1^T\bg v_2 + \alpha \beta (v_1^T\bg v_2)^T\\
    & = \alpha^2 (\lambda_2 v_1^T v_1 - v_1^TA_Gv_1)\\ 
    & = \alpha^2 (\lambda_2 - \lambda_1).
\end{align*}

Since $\alpha^2 (\lambda_2 - \lambda_1)$ is clearly nonpositive, this means that either $\lambda_1 = \lambda_2$ or $\alpha = 0$. In the first case $\lambda_2$'s eigenspace has dimension at least 2, so there is a nonzero intersection with $\one^{\perp}$, so we can set $v_2 \in \one^{\perp}$. In the second case we have $\beta v_2 \in \onep$, and $\beta \neq 0$, so $v_2 \in \onep$.
\end{proof}

\begin{corollary}\label{cor2}
    If $G$ is a graph with ratio $k$ that satisfies conditions \ref{1bar}, \ref{2bar}, and \ref{3bar} then if $\lambda_1 > \lambda_2$, all eigenvectors of $\lambda_2$ are in $\onep$, and if $\lambda_1 = \lambda_2$, then all but one vector of the eigenbasis of $\lambda_2$ can be chosen to be in $\onep$.
\end{corollary}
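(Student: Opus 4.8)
The plan is to exploit the formula from Lemma~\ref{lbd2}, which lets us take $\bg = \lambda_2 I - A_G$. With this identification, condition~\ref{1bar} says exactly that $x^T A_G x \le \lambda_2\, x^T x$ for every $x \in \onep$, and every eigenvector of $\lambda_2$ lies in the nullspace of $\bg$. The argument then splits into the two cases of the statement, both of which are refinements of the computation already carried out in the proof of Lemma~\ref{v2}. Throughout I will use that the largest eigenvalue $\lambda_1$ admits a (Perron) eigenvector $v_1$ with nonnegative coordinates, so that $\langle v_1, \one\rangle > 0$ and $v_1 \notin \onep$; this is the same fact invoked in Lemma~\ref{lbd2}, and it holds even when $G$ is disconnected because $A_G$ is a nonnegative symmetric matrix.

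First I would treat the case $\lambda_1 > \lambda_2$. Here $\lambda_1$ is simple. Let $u$ be an arbitrary eigenvector of $\lambda_2$; since it lies in a different eigenspace, $u \perp v_1$, so $\operatorname{span}(v_1,u)$ is two-dimensional and meets the hyperplane $\onep$ in at least a line. Thus there are scalars $\alpha,\beta$, not both zero, with $\alpha v_1 + \beta u \in \onep$. Substituting this vector into condition~\ref{1bar} and using $\bg v_1 = (\lambda_2 - \lambda_1) v_1$ and $\bg u = 0$, the cross terms vanish and I am left with $\alpha^2(\lambda_2 - \lambda_1)\|v_1\|^2 \ge 0$. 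Because $\lambda_2 - \lambda_1 < 0$, this forces $\alpha = 0$, whence $\beta \neq 0$ and $u \in \onep$. As $u$ was arbitrary, the entire eigenspace of $\lambda_2$ lies in $\onep$.

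For the case $\lambda_1 = \lambda_2$, the Perron vector $v_1$ now belongs to the eigenspace $E = E_{\lambda_2}$. The key observation is that $\langle v_1, \one\rangle > 0$ forces the orthogonal projection $P_E\one$ of $\one$ onto $E$ to be nonzero, since $\langle v_1, P_E\one\rangle = \langle v_1,\one\rangle > 0$. For any $u \in E$ one has $\langle u,\one\rangle = \langle u, P_E\one\rangle$, so $E \cap \onep$ is exactly the orthogonal complement of $P_E\one$ inside $E$, a subspace of dimension $\dim E - 1$. Taking an orthonormal basis of this hyperplane together with $P_E\one$ itself yields an eigenbasis of $\lambda_2$ in which all but one vector lie in $\onep$, as claimed.

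I expect the only genuine subtlety to lie in the bookkeeping of the $\lambda_1 = \lambda_2$ case: one must argue that exactly one dimension is lost, i.e.\ that $E \cap \onep$ is a genuine hyperplane of $E$ rather than smaller or all of $E$. This reduces precisely to showing $P_E\one \neq 0$, which the positivity $\langle v_1,\one\rangle > 0$ supplies. The first case is essentially immediate from the quadratic-form argument, so the work concentrates on setting up the projection picture cleanly in the second.
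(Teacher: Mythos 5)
Your proof is correct and takes essentially the same route as the paper: the $\lambda_1 > \lambda_2$ case is exactly the quadratic-form computation from the proof of Lemma~\ref{v2}, applied to an arbitrary eigenvector of $\lambda_2$ (which is automatically orthogonal to $v_1$ there), and the $\lambda_1 = \lambda_2$ case is the paper's dimension count for $E_{\lambda_2} \cap \onep$, which you merely rephrase via the projection $P_E\one$. Your Perron-vector argument that $P_E\one \neq 0$ even yields the slightly sharper fact that exactly one basis vector must lie outside $\onep$, whereas the statement only requires that all but one \emph{can} be chosen in $\onep$.
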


\begin{proof}
From the proof of Lemma~\ref{v2}, we can see that the eigenspace of $\lambda_2$ is in $\onep$ when $\lambda_1 > \lambda_2$. If $\lambda_1 = \lambda_2$, we can choose the eigenbasis of $\lambda_2$ so that at most one of the basis vectors is not in $\onep$, so we can assign it to be the eigenvector for $\lambda_1$.
\end{proof}

Now, we are ready to prove that condition \ref{1bar} implies conditions \ref{2bar} and \ref{3bar}, so we can remove the latter as the information from them is captured in the new formula for $\bg$.

\begin{lemma}\label{1l}
Given a graph $G$ on $d+2$ vertices and $\bg = \lambda_2 I - A_G$. If for every $w \in \onep$ $w^T\bg w \geq 0$, then conditions \ref{2bar} and \ref{3bar} are satisfied. 
\end{lemma}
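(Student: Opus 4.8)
The plan is to establish condition \ref{2bar} first and then deduce \ref{eq: 3'}, observing at the outset that the analogue \ref{3bar} comes for free: since $\bg = \lambda_2 I - A_G$ and $\lambda_2$ is an eigenvalue of $A_G$, we have $\det(\bg) = \prod_i (\lambda_2 - \lambda_i) = 0$ automatically. Thus the real content of the lemma is producing the special vector required by \ref{2bar}.

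To prove \ref{2bar}, I would first pin down the maximum of the Rayleigh quotient of $A_G$ over $\onep$. On one hand, the hypothesis \ref{1bar} rewrites as $\frac{x^T A_G x}{x^T x} \le \lambda_2$ for every nonzero $x \in \onep$. On the other hand, since $\onep$ has dimension $d+1$, the Courant–Fischer–Weyl min-max principle (already invoked in \cref{lbd2}) gives $\max_{x \in \onep} \frac{x^T A_G x}{x^T x} \ge \lambda_2$. Hence this maximum equals $\lambda_2$ and, by compactness of the unit sphere in $\onep$, is attained at some nonzero $w_0 \in \onep$ with $w_0^T \bg w_0 = 0$. The second step is to promote this scalar identity to the vector statement $\bg w_0 \in \operatorname{span}(\one)$: for any $u \in \onep$ and any $t \in \mathbb{R}$, condition \ref{1bar} forces
\[
0 \le (w_0 + tu)^T \bg (w_0 + tu) = 2t\, u^T \bg w_0 + t^2\, u^T \bg u,
\]
and a nonnegative quadratic in $t$ with vanishing constant term must have vanishing linear coefficient, so $u^T \bg w_0 = 0$ for all $u \in \onep$. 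Therefore $\bg w_0 \perp \onep$, i.e. $\bg w_0 = \gamma \one$ for some $\gamma \in \mathbb{R}$, which is precisely \ref{2bar}.

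With \ref{1bar}, \ref{2bar}, and \ref{3bar} now all in hand, I would finish by invoking \cref{v2}, which yields an eigenvector $v_2 \in \onep$ of $A_G$ with $A_G v_2 = \lambda_2 v_2$, equivalently $\bg v_2 = 0$. Using the relation $\bg = \lambda_2 (B_G + J)$ from \cref{prop:newconds} together with $J v_2 = 0$ (as $v_2 \in \onep$), I get $0 = \bg v_2 = \lambda_2 B_G v_2$; since $\lambda_2 > 0$ (the distance ratio being finite, cf.\ the corollary to \cref{lbd2}), this gives $B_G v_2 = 0$, so $\det(B_G) = 0$, establishing \ref{eq: 3'}. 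Alternatively, one may note that $\det(\bg) = 0$ together with \ref{2bar} is equivalent to \ref{eq: 3'} directly, via the change-of-variable argument used in the proof of \cref{prop:newconds} and \cref{remr}.

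I expect the main obstacle to be the proof of \ref{2bar}, as it is the only place where the full strength of \ref{1bar} enters. The subtlety is twofold: the min-max lower bound must be matched against the \ref{1bar} upper bound to force the restricted maximum to equal exactly $\lambda_2$ (and to guarantee that it is \emph{attained}), and then the perturbation argument is needed to upgrade the isotropy $w_0^T \bg w_0 = 0$ to the orthogonality $\bg w_0 \perp \onep$. Once \ref{2bar} is secured, \ref{3bar} is immediate and \ref{eq: 3'} follows in a line, so neither poses any genuine difficulty.
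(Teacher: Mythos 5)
Your proof is correct, but it takes a genuinely different route from the paper's, which is essentially a one-liner: the paper observes that the argument in the proof of \cref{v2} only needs condition \ref{1bar} once $\bg = \lambda_2 I - A_G$ is given (intersect $\operatorname{span}(v_1,v_2)$ with $\onep$ and expand the quadratic form to force $\alpha = 0$ unless $\lambda_1 = \lambda_2$), producing an eigenvector $v_2 \in \onep$ with $\bg v_2 = 0$; this single vector simultaneously witnesses \ref{2bar} (with $\gamma = 0$) and \ref{eq: 3'} (since $B_G v_2 = \tfrac{1}{\lambda_2}\bg v_2 - J v_2 = 0$). You instead prove \ref{2bar} from scratch: Courant--Fischer gives $\max_{x\in\onep} x^TA_Gx/x^Tx \geq \lambda_2$, the hypothesis gives the reverse inequality, compactness gives a maximizer $w_0$, and the perturbation argument (a nonnegative quadratic in $t$ with zero constant term has zero linear term) upgrades $w_0^T\bg w_0 = 0$ to $\bg w_0 \in \operatorname{span}(\one)$ --- the same trick the paper uses later in \cref{pom,last}. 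Only then do you invoke \cref{v2}, with all three of its stated hypotheses now verified, to land the actual null vector needed for \ref{eq: 3'}. Your approach buys a cleaner logical structure --- you use \cref{v2} as stated rather than re-entering its proof under weakened hypotheses, which is the slightly delicate move the paper makes --- at the cost of being longer and of your maximizer $w_0$ satisfying only $\bg w_0 = \gamma\one$ with possibly $\gamma \neq 0$, which is why you cannot avoid \cref{v2} for the final step. Two small remarks: your opening observation that $\det(\bg) = \prod_i(\lambda_2 - \lambda_i) = 0$ automatically is correct and worth keeping, and your appeal to $\lambda_2 > 0$ when dividing $\bg v_2 = \lambda_2 B_G v_2 = 0$ is at the same level of implicitness as the paper itself (the identification $\bg = \lambda_2 I - A_G = \tfrac{1}{k^2-1}(B_G + J)$ already presupposes $\lambda_2 = \tfrac{1}{k^2-1} > 0$), so it is not a gap relative to the paper; your alternative closing via \cref{remr} and \cref{prop:newconds} is equally valid.
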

\begin{proof}
    By the proof of Lemma~\ref{v2}, if condition \ref{1bar} is true for the matrix $\bg = \lambda_2 I - A_G$, then the eigenvector $v_2$ of the second largest eigenvalue is in $\onep$. From this we can deduce that $\bg v_2 = 0$, which implies both conditions \ref{2bar} and \ref{3bar}.
\end{proof}

\section{Reduction of the positive-semidefinite condition} \label{reduction}

In this section we simplify condition \ref{4bar}. To do this, we use condition~\ref{1bar} to reduce condition \ref{4bar} to the existence of an upper bound on $\displaystyle \frac{(\one^T A_G v)^2}{\lambda_2 v^T v - v^TA_G v}$ for $v\in \onep$. Lemma~\ref{pom} gives a necessary and sufficient condition for the existence of this bound.
\begin{lemma}\label{pom}
Let $G$ be a graph on $d+2$ vertices such that $\bg = \lambda_2 I - A_G$, and $\bg$ satisfies condition \ref{1bar}, and let $V$ be a subspace of $\onep$. There exists a real number $r$ such that
\begin{equation}\label{ineq}
    r(d+2)^2(\lambda_2v^Tv - v^TA_Gv) \geq (\one^T A_G v)^2 \text{ for every } v\in V
\end{equation}
if and only if $\one^TA_Gv = 0$ for all vectors $v\in V$ such that $\lambda_2 v^Tv = v^TA_Gv$.
\end{lemma}
\begin{proof}
    The forward direction is straightforward.
    For the reverse direction, we induct on the dimension of $V$. The base case $\dim (V) = 0$ is trivial.    For the inductive step, assume that $V$ has dimension at least one and that the statement holds for all subspaces of $\onep$ with lower dimension. Divide both sides of inequality \eqref{ineq} by $v^Tv$ to obtain 
    \begin{equation*}
        r(d+2)^2\left(\lambda_2 - \frac{v^TA_Gv}{v^Tv}\right) \geq \frac{(\one^T A_G v)^2}{v^Tv}.
    \end{equation*} By Cauchy-Schwartz’s inequality, we can bound the absolute value of the right-hand side $\left|\frac{(\one^TA_G v)^2}{v^Tv}\right| \leq ||\one^T A_G||^2$. Denote by 
    \begin{equation*}
    f(v) = \lambda_2 - \frac{v^TA_Gv}{v^Tv}.
    \end{equation*} 
    Let $L = \displaystyle \inf_{\substack{v\in V \\ v\neq 0}} f(v)$. Note that $L\geq 0$ by condition \ref{1bar}. If $L>0$, the inequality is true for every $r \geq \frac{||1^TA_G||^2}{(d+2)^2L}$. If $L = 0$, then as $f$ is invariant under scaling, so $L$ is also the infimum of $f$ on the unit sphere, which is compact. Therefore, this infimum is achieved, i.e., there exists a unit vector $w$ such that $f(w) = 0$ (so $\lambda_2 - w^TA_Gw = 0$). 
    
    We show that $A_G w = \lambda_2 w$, i.e.\ $w$ is an eigenvector of $A_G$. Note that \(P^T\overline{B}_G P\) is positive semidefinite, where \(P\) is the orthogonal projection matrix onto $\onep$, and $P^T = P$ holds. For $v\in \onep$, we have
    \begin{equation*}
        v^T \overline{B}_G v \;=\; v^T P^T \overline{B}_G P v \;=\; 0
    \end{equation*}
    if and only if $v$ is an eigenvector of $P^T\overline{B}_G P$ with eigenvalue $0$. Since $w$ is an eigenvector of $P^T\bg P$ with eigenvalue $0$, and hence
    \begin{equation*}
            0 = \bigl(P^T \bg P\bigr)w = P^T\bg w.
    \end{equation*}
    From the assumption $\bg w\in \onep$, it follows that
    \begin{equation*}
        \bg w = P^T\bigl(\bg w\bigr) = 0,
    \end{equation*}
    and hence $w$ is an eigenvector corresponding to $\lambda_2$, so $A_G w = \lambda_2 w$.

    By the inductive hypothesis, the inequality \eqref{ineq} holds for all $v \in V\cap \operatorname{span}(w)^{\perp}$. If $v$ is not perpendicular to $w$, by scaling $v$ we can write $v = w + \varepsilon x$ where $||x|| = 1$ and $x\perp w$. The inequality becomes 
    $$ r(d+2)^2(\lambda_2 + \lambda_2 \varepsilon^2 - w^TA_Gw - 2\varepsilon x^TA_Gw - \varepsilon^2 x^TA_Gx) \geq (\one^T A_G w + \varepsilon \one^T A_G x)^2 $$
    \begin{equation} \label{quadratic}
        r(d+2)^2(\lambda_2 \varepsilon^2 - 2\varepsilon x^TA_Gw - \varepsilon^2 x^TA_Gx) \geq (\varepsilon \one^T A_G x)^2.
    \end{equation} 

    As $x \perp w$, it follows that $x^T A_G w = \lambda_2 x^T w = 0$. Dividing both sides of the inequality \eqref{quadratic} by $\varepsilon^2$ gives \begin{equation*}
        r(d+2)^2(\lambda_2 - x^TA_Gx) \geq (\one^T A_G x)^2
    \end{equation*} for $x \in V\cap \operatorname{span}(w)^{\perp}$ that has lower dimension than $V$, so the inductive step is completed.
\end{proof}
We are now equipped to simplify condition \ref{4bar}. We prove that it is equivalent to condition \ref{1bar} combined with a new condition \ref{eq: 5}.
\begin{lemma}\label{ref4}
A graph $G$ on $d+2$ vertices is spherical if and only if $w^T\bg w \geq 0$ for all vectors $w \in \one^{\perp}$ where $\bg = \lambda_2 I - A_G$, and 
\begin{enumerate}[label={$\overline{(\arabic*)}$}]
\setcounter{enumi}{4}
\item \label{eq: 5} $\one ^T A_Gw = 0$ for all vectors $w\in \one^{\perp}$ such that $w^T\bg w = 0$.
\end{enumerate}
\end{lemma}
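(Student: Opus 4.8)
The plan is to reduce the claim, by way of the earlier results, to a single application of Lemma~\ref{pom}. By Proposition~\ref{prop:newconds}, $G$ is spherical in $\rd$ exactly when conditions \ref{1bar}, \ref{2bar}, \ref{3bar}, and \ref{4bar} hold, and by Lemma~\ref{1l} condition \ref{1bar} already forces \ref{2bar} and \ref{eq: 3'} (hence \ref{3bar}). Sphericity is therefore equivalent to \ref{1bar} together with \ref{4bar}, so it suffices to show that, \emph{under} \ref{1bar}, condition \ref{4bar} is equivalent to the new condition \ref{eq: 5}.

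The key computation is to diagonalize the quadratic form $rJ+\bg$ along the decomposition $\mathbb{R}^{d+2} = \operatorname{span}(\one)\oplus\onep$. Writing an arbitrary vector as $u = c\one + w$ with $w\in\onep$, and using $J\one = (d+2)\one$, $Jw = 0$, and $\one^Tw = 0$, I would expand
\[
    u^T(rJ+\bg)u = \big(r(d+2)^2 + \kappa\big)c^2 - 2(\one^T A_G w)\,c + w^T\bg w,
\]
where $\kappa = \one^T\bg\one = \lambda_2(d+2) - 2|E|$ is a fixed constant. For each fixed $w$ this is a quadratic in $c$; given \ref{1bar} its constant term $w^T\bg w$ is nonnegative, and for $r$ large the leading coefficient $r(d+2)^2+\kappa$ is positive, so positive semidefiniteness of the form is equivalent to the discriminant being nonpositive for every $w$, i.e.
\[
    (\one^T A_G w)^2 \leq \big(r(d+2)^2+\kappa\big)\,w^T\bg w \quad\text{for all } w\in\onep.
\]

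I would then observe that condition \ref{4bar}---the existence of $r_0$ with $rJ+\bg\succeq 0$ for all $r\geq r_0$---amounts exactly to the existence of a sufficiently large constant $C$ with $(\one^T A_G w)^2 \leq C\,w^T\bg w$ for all $w\in\onep$; and since $w^T\bg w\geq 0$ under \ref{1bar}, once this bound holds for one constant it holds for every larger one, which recovers the ``for all $r\geq r_0$'' quantifier automatically. Recalling $w^T\bg w = \lambda_2 w^Tw - w^TA_Gw$, this is precisely the existence hypothesis of Lemma~\ref{pom} taken with $V=\onep$. Lemma~\ref{pom} equates it with $\one^TA_Gw = 0$ for all $w\in\onep$ satisfying $\lambda_2 w^Tw = w^TA_Gw$, that is, $w^T\bg w = 0$, which is exactly condition \ref{eq: 5}, completing the equivalence.

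The main obstacle I anticipate is the bookkeeping around the leading coefficient $r(d+2)^2+\kappa$: I must justify that taking $r$ large both makes this coefficient positive (so the discriminant criterion is the correct one) and that the genuinely obstructing vectors are exactly those with $w^T\bg w = 0$ but $\one^T A_G w\neq 0$, which no choice of $r$ can accommodate. This degeneracy is precisely what Lemma~\ref{pom} is built to isolate, so once the quadratic-form expansion is in hand, the remaining work is a careful matching of quantifiers rather than a new idea.
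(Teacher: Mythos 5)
Your proposal is correct and matches the paper's own proof essentially step for step: both decompose an arbitrary vector as $c\one + w$ with $w\in\onep$, expand $u^T(rJ+\bg)u$ into a quadratic in $c$, observe that (under \ref{1bar}, which by Lemma~\ref{1l} already yields \ref{2bar} and \ref{3bar}) positive semidefiniteness for all large $r$ amounts to a nonpositive-discriminant bound of the form $(\one^T A_G w)^2 \le C\, w^T\bg w$ on $\onep$, and then invoke Lemma~\ref{pom} with $V=\onep$ to convert that bound into condition \ref{eq: 5}. The only deviation is cosmetic: you carry the exact leading coefficient $r(d+2)^2 + \one^T\bg\one$ where the paper absorbs constants and writes $r(d+2)$, which changes nothing since only the existence of a sufficiently large constant matters.
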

\begin{proof}
    We can easily see that condition \ref{4bar} implies condition \ref{1bar}, so if we know that  \ref{4bar} holds for the matrix $\bg = \lambda_2 I - A_G$, then by Lemma~\ref{1l} we also have conditions \ref{2bar} and \ref{3bar}, so $G$ has a spherical representation.
    
    Thus, we need to check that condition \ref{1bar} combined with the fact that  $\one^TA_Gw = 0$ for all vectors $w\in \one^{\perp}$ such that $\lambda_2 w^Tw = w^TA_Gw$ is equivalent to condition \ref{4bar}.
    
    Every vector $u$ can be expressed as $\alpha \one + \beta v$ where $v \in \onep$. So, condition \ref{4bar} states that for all $\alpha, \beta, v$ and sufficiently large $r$
    \begin{align*}
        0 & \leq (\alpha \one + \beta v)^T(rJ + \lambda_2 I - A_G)(\alpha \one + \beta v) \\
         & = r (\alpha \one + \beta v)^T J (\alpha \one + \beta v) + (\alpha \one + \beta v)^T(\lambda_2 I - A_G)(\alpha \one + \beta v).
    \end{align*}
    
    As $\operatorname{span}(J) = \operatorname{span}(\one)$, the right-hand side expression is equal to 
    \begin{align*}
    & \alpha^2 r(d+2)^2 + (\alpha \one + \beta v)^T(\lambda_2 I - A_G)(\alpha \one + \beta v) \\
    =\  & \alpha^2 r(d+2)^2 + \lambda_2 (\alpha \one + \beta v)^T(\alpha \one + \beta v) - (\alpha \one + \beta v)^TA_G(\alpha \one + \beta v)\\
    =\  & \alpha^2 r(d+2)^2 + \lambda_2 \alpha^2 (d+2) + \lambda_2 \beta^2v^Tv - (\alpha \one + \beta v)^TA_G(\alpha \one + \beta v)\\
    =\  & \alpha^2 r(d+2)^2 + \lambda_2 \alpha^2 (d+2) + \lambda_2 \beta^2v^Tv - \alpha^2 \one^TA_G\one - \beta^2 v^TA_Gv - 2\alpha \beta \one^T A_G v \\
    =\  & \alpha^2\left( \lambda_2  (d+2) - \one^TA_G\one + r(d+2)^2\right) - 2\alpha \beta \one^T A_G v + \beta^2(\lambda_2 v^Tv -  v^TA_Gv).
     \end{align*}
If $\alpha = 0$ this becomes condition \ref{1bar}. 

Assume that $\alpha\neq 0$. The first two terms in the coefficient of $\alpha^2$ are constants, so we can ignore them and increase $r$ by a constant. So, we want 
\begin{equation*}
    \alpha^2 r(d+2)^2 - 2\alpha \beta \one^T A_G v + \beta^2(\lambda_2 v^Tv -  v^TA_Gv) \geq 0.
\end{equation*}
Thus the discriminant of the quadratic polynomial should be non-positive. That is \begin{equation*}
    r(d+2)^2(\lambda_2 v^Tv -  v^TA_Gv) \geq (\one^T A_G v)^2 
\end{equation*}for sufficiently large $r$, which reduces the lemma to Lemma~\ref{pom}.
\end{proof}

We can simplify condition \ref{eq: 5} even further.

\begin{lemma}\label{last}
Let $G$ be a graph on $d+2$ vertices. Then $G$ has a spherical representation if and only if condition \ref{1bar} holds for $\bg = \lambda_2 I - A_G$, and for every vector $w \in \onep$ such that $w^T\bg w = 0$ we have $\bg w = 0$.
\end{lemma}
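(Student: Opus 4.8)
The plan is to invoke Lemma~\ref{ref4}, which already establishes that $G$ has a spherical representation if and only if condition \ref{1bar} holds together with condition \ref{eq: 5}. Thus it suffices to show that, in the presence of \ref{1bar}, condition \ref{eq: 5} is equivalent to the stated condition that every $w \in \onep$ with $w^T \bg w = 0$ satisfies $\bg w = 0$. One direction is immediate: if $\bg w = 0$ then $A_G w = \lambda_2 w$, so $\one^T A_G w = \lambda_2 \one^T w = 0$ because $w \in \onep$, which is exactly condition \ref{eq: 5}.

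For the converse, I would assume \ref{1bar} and \ref{eq: 5} and take an arbitrary $w \in \onep$ with $w^T \bg w = 0$. Let $P = I - \frac{1}{d+2}J$ be the orthogonal projection onto $\onep$, and set $M = P \bg P$. First I would observe that $M$ is positive semidefinite on all of $\mathbb{R}^{d+2}$: for any $v$ we have $v^T M v = (Pv)^T \bg (Pv) \geq 0$ by condition \ref{1bar}, since $Pv \in \onep$. Because $Pw = w$, this gives $w^T M w = w^T \bg w = 0$.

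The key step is the standard fact that a vector on which a positive-semidefinite form vanishes must lie in its kernel: writing $M = (M^{1/2})^T M^{1/2}$, the identity $w^T M w = \|M^{1/2} w\|^2 = 0$ forces $M^{1/2} w = 0$ and hence $M w = 0$. Applied here, and using $Pw = w$, we obtain
\[
    M w = P \bg P w = P \bg w = 0,
\]
so $\bg w \in \operatorname{null}(P) = \operatorname{span}(\one)$; that is, $\bg w = c\,\one$ for some scalar $c$. It remains to show $c = 0$, and this is precisely where condition \ref{eq: 5} enters. From $\bg w = \lambda_2 w - A_G w = c\,\one$ we get $A_G w = \lambda_2 w - c\,\one$, whence $\one^T A_G w = \lambda_2 \one^T w - c\,\one^T \one = -c(d+2)$, using $\one^T w = 0$. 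Condition \ref{eq: 5} gives $\one^T A_G w = 0$, so $c = 0$ and therefore $\bg w = 0$, as required.

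The main subtlety is thus not the positive-semidefinite null-vector argument itself, which is routine, but the fact that it only pins down $\bg w$ \emph{up to the $\one$-direction}: the quadratic form on $\onep$ is governed by the projected matrix $P\bg P$, so vanishing of $w^T\bg w$ controls $\bg w$ only modulo $\operatorname{span}(\one)$. Condition \ref{eq: 5} supplies exactly the extra information needed to eliminate that one-dimensional ambiguity, upgrading ``$A_G w \in \onep$'' to the full eigenvector relation $A_G w = \lambda_2 w$.
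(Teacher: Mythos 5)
Your proposal is correct, and it shares its skeleton with the paper's proof: both reduce via Lemma~\ref{ref4} to showing that, in the presence of condition \ref{1bar}, condition \ref{eq: 5} upgrades $w^T\bg w = 0$ to $\bg w = 0$, and the easy direction ($\bg w = 0 \Rightarrow \one^T A_G w = \lambda_2 \one^T w = 0$) is handled identically. Where you genuinely diverge is in the key step. The paper imports the fact that $x^T A_G w = 0$ for all $x \in \onep$ with $x \perp w$ from \emph{inside the proof} of Lemma~\ref{pom} (where it is extracted from the quadratic-in-$\varepsilon$ perturbation inequality), concludes $A_G w \in \operatorname{span}(\one, w)$, intersects with $\onep$ using \ref{eq: 5} to get $A_G w \in \operatorname{span}(w)$, and then separately identifies the eigenvalue as $\lambda_2$ via $0 = (\lambda_2 - \lambda)w^T w$. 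You instead apply the standard positive-semidefinite null-vector fact (via the square root $M^{1/2}$) to $M = P\bg P$, obtaining $P\bg w = 0$, i.e., $\bg w \in \operatorname{span}(\one)$, in one shot, after which \ref{eq: 5} kills the scalar $c$. The two arguments are mathematically equivalent --- the perturbation computation in Lemma~\ref{pom} is exactly the PSD-kernel fact in disguise --- but yours buys two things: it is self-contained, not leaning on an intermediate assertion that is stated only within another lemma's proof (a somewhat fragile citation practice), and because you work with $\bg = \lambda_2 I - A_G$ rather than $A_G$ itself, the $\lambda_2$ shift is built in and the paper's final eigenvalue-identification step disappears entirely. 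Your closing remark correctly isolates the one real subtlety: the quadratic form on $\onep$ only controls $\bg w$ modulo $\operatorname{span}(\one)$, and \ref{eq: 5} is precisely the one-dimensional piece of information needed to remove that ambiguity.
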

\begin{proof}

It suffices to show that if $w^T\bg w = 0$ for some $w\in \onep$ then $1^TA_Gw = 0$ if and only if $\bg w = 0$. Since $w^T\bg w = 0$ is equivalent to $w^TA_G w = \lambda_2 w^Tw$, by the proof of Lemma~\ref{pom} we see that if $A_Gw \in \onep$ then $A_Gw = \lambda_2 w$ and hence $\bg w = 0$. On the other hand, if $\bg w = 0$, then $\one^TA_Gw = \lambda_2 \one^T w = 0$.
\end{proof}

\section{Proof of the main theorem}\label{finsec}
We now use Lemma~\ref{last} to prove Theorem~\ref{mainres} and Proposition~\ref{lowestdimensionspherical}.

\begin{proof}[Proof of Theorem~\ref{mainres}]
    Let the eigenvalues of $PA_G P$ be $\mu_1\geq \mu_2 \geq \dots \geq \mu_{d+1}$ and 0 where 0 corresponds to the eigenvector $\one$. The matrices $PA_GP$ and $P\bg P$ have the same set of eigenvectors ($\bg = \lambda_2I - A_G$ and $P$ is a projection matrix). Suppose that the maximum eigenvalue of $PA_GP$ is $\lambda_2$. Therefore the spectrum of $P\bg P$ is the set $\lambda_2 - \mu_1, \lambda_2 - \mu_2, \dots, \lambda_2 - \mu_{d+1}, \lambda_2$. Thus all eigenvalues of $P\bg P$ are nonnegative, so condition \ref{1bar} holds for $\bg$. Conversely, if condition~\ref{1bar} holds, then $\mu_1 \leq \lambda_2$. Furthermore, by Lemma~\ref{1l}, conditions \ref{2bar} and \ref{3bar} hold. So, by Lemma~\ref{v2}, there is an eigenvector with an eigenvalue $\lambda_2$ of $A_G$ such that $v_2\in \onep$, thus $v_2$ is an eigenvector of $PA_GP$ and $P\bg P$, so $\mu_1 \geq \lambda_2$. Thus $\mu_1 = \lambda_2$.
    
    Now, by Lemma~\ref{last}, we have to prove that if we assume condition \ref{1bar} holds for $\bg$, then the following two conditions are equivalent.
    \begin{itemize}
        \item [(a)]The multiplicity of $\lambda_2$ in $A_G$ (excluding $\lambda_1$ if $\lambda_1 = \lambda_2$) is equal the multiplicity of $\lambda_2$ in $PA_G$.
        \item [(b)] We have $w^T\bg w = 0$ if and only if $\bg w = 0$ for all $w\in \onep$.
    \end{itemize}
    
    Condition \ref{1bar} holds for $\bg$, so by Lemma~\ref{1l}, conditions \ref{2bar} and \ref{3bar} hold, thus Corollary~\ref{cor2} holds and every eigenvector of $\lambda_2$ (except the eigenvector of $\lambda_1$ when $\lambda_1 = \lambda_2$) is also an eigenvector of $PA_GP$, thus the multiplicity of $\lambda_2$ in $A_G$ (excluding $\lambda_1$ when $\lambda_1 = \lambda_2$) is at most the multiplicity of $\lambda_2$ in $P A_G P$. By condition \ref{1bar} $w^T\bg w = 0$ if and only if $w$ is an null vector of $P\bg P$, i.e., an eigenvector of $\lambda_2$ in $A_G$. So, if $w^T\bg w = 0$ is true only if $\bg w = 0$, i.e., $w$ is an eigenvector of $\lambda_2$ in $A_G$, then the the multiplicity of $\lambda_2$ in $A_G$ (excluding $\lambda_1$ when $\lambda_1 = \lambda_2$) is at least the multiplicity of $\lambda_2$ in $P A_G P$, so they are equal. The reverse also holds.
\end{proof}
    We can combine \cref{mainres} with Corollary~\ref{nullspace} to obtain the lowest dimensional space in which a given 2-distance set is representable. 
\begin{proof}[Proof of Proposition~\ref{lowestdimensionspherical}]
    By Lemma~\ref{main}, $G$ is representable in $\rn$ if and only if $\rank (B'_G) \leq n$ where $B'_G= \left(I - \one e_1^T\right)B_G\left(I - e_1\one^T\right).$ Let $S$ be the space of vectors $w\in \onep$ such that $B_Gw = \gamma \one$ for some real number $\gamma$. By Corollary~\ref{nullspace}, $\rank (B'_G) = d+2 - 1 - \rank(S) = d+1 - \rank(S)$. Note that the nullspace of $PB_GP$ which is $\one \cup S$, thus $\rank(PB_GP) = d+2 - 1 -\rank (S) = d+1-\rank(S) = \rank (B'_G)$, so $\rank (B'_G) \leq n$, but the nullspace of $PB_GP$ is the union of $\one$ and eigenspace of $\lambda_2$ in $PA_GP$.  For spherically representable graphs the multiplicity of $\lambda_2$ in $PA_GP$ is the same as the multiplicity of $\lambda_2$ in $A_G$. Therefore, $G$ is representable in $\rn$ if the multiplicity of $\lambda_2$ in $A_G$ is at least $d+1-n$.
\end{proof}
We also give an equivalent formulation of \cref{mainres} using the Cauchy interlacing theorem, which gives a more intuitive understanding of what \cref{mainres} means.
\begin{theorem}[Cauchy interlacing theorem \cite{hwang2004cauchy}]
    Let $A$ be a symmetric $n \times n$ matrix and $B$ be an $m \times m$ matrix with $B = PAP^*$ where $P$ is an orthogonal projection onto a subspace of dimension $m$. Then if the eigenvalues of $A$ are $\alpha_1 \geq \alpha_2 \geq \dots \geq \alpha_n$ and the eigenvalues of $B$ are $\beta_1 \geq \beta_2 \geq \dots \geq \beta_m$, then for all $j \leq m$
    $$\alpha_j \geq \beta_j \geq \alpha_{j+n-m}.$$
\end{theorem}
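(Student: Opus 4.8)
The plan is to deduce both interlacing inequalities directly from the Courant--Fischer--Weyl min-max principle, which the paper already uses in the proof of \cref{lbd2}. First I would pin down the interpretation of the statement: regard $P$ as an $m \times n$ matrix whose rows form an orthonormal basis of the $m$-dimensional subspace, so that $PP^* = I_m$ and $B = PAP^*$ is the compression of $A$ to the subspace $\mathcal{S} = \operatorname{range}(P^*)$. The key preliminary observation is that $y \mapsto P^*y$ is a linear isometry of $\mathbb{R}^m$ onto $\mathcal{S}$: since $\|P^*y\|^2 = y^T P P^* y = y^T y$ and $(P^*y)^T A (P^*y) = y^T B y$, the Rayleigh quotient of $B$ at $y$ equals the Rayleigh quotient $R(x) := x^T A x / x^T x$ of $A$ at $x = P^*y \in \mathcal{S}$. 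This correspondence is a dimension-preserving bijection between subspaces of $\mathbb{R}^m$ and subspaces of $\mathcal{S}$.

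With this identification the upper bound $\beta_j \le \alpha_j$ follows from the max-min form of Courant--Fischer. We have
\[
\beta_j = \max_{\substack{V \subseteq \mathcal{S} \\ \dim V = j}} \min_{0 \neq x \in V} R(x), \qquad \alpha_j = \max_{\substack{U \subseteq \mathbb{R}^n \\ \dim U = j}} \min_{0 \neq x \in U} R(x).
\]
Every admissible subspace $V$ for $\beta_j$ is also admissible for $\alpha_j$, but not conversely; restricting the outer maximum to subspaces of $\mathcal{S}$ can only decrease it, which gives $\beta_j \le \alpha_j$.

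For the lower bound $\beta_j \ge \alpha_{j+n-m}$ I would switch to the dual min-max form, where
\[
\beta_j = \min_{\substack{V \subseteq \mathcal{S} \\ \dim V = m-j+1}} \max_{0 \neq x \in V} R(x), \qquad \alpha_{j+n-m} = \min_{\substack{U \subseteq \mathbb{R}^n \\ \dim U = m-j+1}} \max_{0 \neq x \in U} R(x).
\]
Both minimizations now run over subspaces of the \emph{same} dimension $m-j+1$, but the one computing $\beta_j$ ranges only over subspaces contained in $\mathcal{S}$; restricting the feasible set of a minimum can only increase its value, so $\beta_j \ge \alpha_{j+n-m}$.

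The isometry computation and the two applications of Courant--Fischer are routine; the one place needing care, and the main obstacle, is the index bookkeeping in the lower bound. One must verify that the codimension $m-j+1$ inside $\mathbb{R}^m$ matches, under the min-max characterization of $A$ on $\mathbb{R}^n$, precisely the $(j+n-m)$-th eigenvalue, i.e. that $n-(j+n-m)+1 = m-j+1$, and then confirm the range $1 \le j \le m$ keeps every index $j+n-m$ in $\{1,\dots,n\}$, together with the boundary cases $j=1$ and $j=m$. As a backup I would keep the dimension-counting alternative: for each $j$ one intersects $\operatorname{span}(v_1,\dots,v_{j+n-m})$ (top eigenvectors of $A$) with the image under $P^*$ of the bottom eigenspace of $B$, whose dimensions sum to $n+1$, forcing a nonzero common vector $x$ satisfying $\alpha_{j+n-m} \le R(x) \le \beta_j$, and symmetrically for the upper bound.
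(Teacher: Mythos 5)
Your proof is correct, but there is nothing in the paper to compare it against: the paper quotes the Cauchy interlacing theorem from the literature (the citation is to Hwang's Monthly note) without proof, using it only to reinterpret the main theorem. Your reading of the hypothesis is the right one --- for $B=PAP^*$ to be $m\times m$, $P$ must be the $m\times n$ matrix whose rows are an orthonormal basis of the subspace, so $PP^*=I_m$ --- and with that in place the argument is sound: the map $y\mapsto P^*y$ preserves Rayleigh quotients, the max--min form of Courant--Fischer gives $\beta_j\le\alpha_j$ by shrinking the feasible set of a maximum, and the dual min--max form gives $\beta_j\ge\alpha_{j+n-m}$ since $n-(j+n-m)+1=m-j+1$, so both minimizations run over subspaces of equal dimension and restricting a minimum to subspaces of $\mathcal{S}$ can only raise it; the index range $1\le j\le m$ indeed keeps $j+n-m$ in $\{1,\dots,n\}$. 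Your approach is arguably more self-contained than the cited source: Hwang's argument establishes the codimension-one (principal submatrix) case, and the general compression statement then requires iterating, whereas your min--max argument handles arbitrary $m$ in one step and uses exactly the tool the paper already invokes (the Courant--Fischer--Weyl principle appears in the proof of Lemma~\ref{lbd2}). Your backup dimension count is also valid: $\operatorname{span}(v_1,\dots,v_{j+n-m})$ and $P^*\operatorname{span}(w_j,\dots,w_m)$ have dimensions summing to $n+1$, forcing a common nonzero vector $x$ with $\alpha_{j+n-m}\le R(x)\le\beta_j$.
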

In our case, if the eigenvalues of $PA_GP$ are $\mu_1 \geq \mu_2 \geq \dots \geq \mu_{n-1}$, then \[\lambda_1 \geq \mu_1 \geq \lambda_2 \geq \mu_2 \geq \dots \geq \lambda_k \geq \mu_k \geq \lambda_{k+1} \geq \dots \geq \lambda_n.\]
The condition of \cref{mainres} is therefore equivalent to $\mu_1 = \lambda_2$ and $\mu_k < \lambda_k$, where $k$ is such that $\lambda_{k+1} < \lambda_k = \lambda_2$. Proposition~\ref{lowestdimensionspherical} implies that the lowest representable dimension of a spherical graph on $n$ vertices is $n - k$.

The main theorem also implies that all small regular graphs have a spherical representation: 
\begin{corollary}
    If $G$ is a regular graph on $d+2$ vertices that is not complete multipartite, then $G$ has a spherical representation in $\rd$. 
\end{corollary}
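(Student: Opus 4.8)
The plan is to apply \cref{mainres} to a regular graph $G$ that is not complete multipartite. The key structural fact I would exploit is that for a $\rho$-regular graph, the all-ones vector $\one$ is itself an eigenvector of $A_G$, with eigenvalue $\rho$. Since the entries of $A_G$ are nonnegative, the Perron--Frobenius theorem tells us that $\lambda_1 = \rho$ and that $\one$ (up to scaling) is a corresponding Perron eigenvector. First I would record that because $\one$ is an eigenvector of $A_G$, the projection $P = I - \frac{1}{d+2}J$ commutes with $A_G$ on the relevant subspaces: every eigenvector of $A_G$ orthogonal to $\one$ is unchanged by $P$, and $\one$ itself is sent to $0$. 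Consequently $PA_GP$ acts on $\onep$ exactly as $A_G$ does, and its spectrum is precisely $\{\lambda_2, \lambda_3, \dots, \lambda_{d+2}\}$ together with the eigenvalue $0$ coming from $\one$.

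Given this, I would verify the two conditions of \cref{mainres} directly. For the maximum-eigenvalue condition, since $\lambda_1 = \rho$ is attained only on $\operatorname{span}(\one)$ (here I would use that $G$ is connected so that the Perron eigenvalue is simple; if $G$ is disconnected one argues component-wise, or notes $\lambda_1 = \lambda_2$), the largest eigenvalue of $PA_GP$ is $\lambda_2$, as needed. For the multiplicity condition, the eigenvectors of $\lambda_2$ in $A_G$ other than $\one$ already lie in $\onep$ and are therefore eigenvectors of $PA_GP$ with the same eigenvalue; conversely $PA_GP$ introduces no new $\lambda_2$-eigenvectors beyond these. Hence the multiplicity of $\lambda_2$ in $A_G$ (excluding $\lambda_1$ when $\lambda_1 = \lambda_2$) equals its multiplicity in $PA_GP$. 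Both conditions of \cref{mainres} therefore hold, so $G$ is spherically representable.

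The one hypothesis I have not yet used is that $G$ is not complete multipartite, and this is where the main obstacle lies. By \cref{main}, a complete multipartite graph on $d+2$ vertices is not representable in $\rd$ at all; the non-complete-multipartite assumption is exactly what guarantees representability (and hence a genuine value of $k$), which is a standing assumption of the whole framework leading to \cref{mainres}. I would make sure the argument is phrased so that representability is invoked before the spectral criterion, since \cref{mainres} characterizes spherical representability among representable graphs. The subtle point to get right is the degenerate case $\lambda_1 = \lambda_2$, where $\one$ must be counted as (one copy of) the Perron eigenvector and the parenthetical ``excluding $\lambda_1$'' in both the theorem statement and the multiplicity count becomes essential; I expect the bookkeeping here, rather than any deep inequality, to be the part demanding the most care.
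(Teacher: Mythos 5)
Your proposal is correct and takes essentially the same route as the paper's own proof: regularity makes $\one$ an eigenvector of $A_G$, so $PA_GP$ agrees with $A_G$ on $\onep$, giving $\mu_i = \lambda_{i+1}$, from which both conditions of \cref{mainres} follow at once. Your added care about the disconnected case and about where non-complete-multipartiteness (ensuring representability via \cref{main}) enters is more explicit than the paper's one-line argument, but it is the same approach, not a different one.
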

\begin{proof}
    If $G$ is a regular graph, then the eigenvector of the first eigenvalue is $\one$, thus $\mu_i = \lambda_{i+1}$ for $i = 1,2\dots, n-1$, which clearly satisfies \cref{mainres}.
\end{proof}

It may be possible to derive more necessary or sufficient combinatorial conditions for a graph to be spherical based on our main theorem.

\smallskip

\paragraph{\bfseries Acknowledgments.} 
The bulk of this research was conducted during Research Science Institute (RSI) hosted by the Center for Excellence in Education (CEE) at Massachusetts Institute of Technology (MIT), where the first author was a participant and the second author was a research mentor. We thank Dr.~Tanya Khovanova, Prof.~David Jerison and Prof.~Ankur Moitra for helpful discussions during the RSI program. We thank Eli Meyers for his help with various technical issues during the project, and Dr.~John Rickert and Yunseo Choi for their feedback on earlier versions of this paper. We are also grateful to the anonymous reviewer for suggesting a simpler proof that \(w\) is an eigenvector of \(A_G\). We are thankful to HSSIMI, Sts.\ Cyril \& Methodius International Foundation, Foundation for Support of Children Mariyana Palanchova, and the MIT Math Department for funding our research.

\bibliographystyle{style}

\bibliography{biblio}

\end{document}